        \pgfplotsset{compat = 1.3}
        \pgfplotsset{minor grid style={dotted}}
        \pgfplotsset{major grid style={dashed}}
        \pgfplotsset{every x tick label/.append style={font=\footnotesize, yshift=0.25ex}}
        \pgfplotsset{every y tick label/.append style={font=\footnotesize, xshift=0.25ex}}
\newtheorem{theorem}{Theorem}
\newtheorem{remark}{Remark}
\newtheorem{prop}{Proposition}
\newtheorem{Lemma}{Lemma}
\newtheorem{corollary}{Corollary}
\numberwithin{equation}{section}
\newcommand{\uu}{\mathbf{u}}
\begin{document}

\title{Numerical conservation laws of time fractional diffusion PDEs}

\author{A. Cardone, G. Frasca--Caccia}
\maketitle

\abstract{The first part of this paper introduces sufficient conditions to determine conservation laws of diffusion equations of arbitrary fractional order in time. Numerical methods that satisfy a discrete analogue of these conditions have conservation laws that approximate the continuous ones. In the second part of the paper, we propose a method that combines a finite difference method in space with a spectral integrator in time. The time integrator has already been applied in literature to solve time fractional equations with Caputo fractional derivative of order $\alpha\in(0,1)$. It is here generalised to approximate Caputo and Riemann-Liouville fractional derivatives of arbitrary order. We apply the method to subdiffusion and superdiffusion equations with Riemann-Liouville fractional derivative and derive its conservation laws. Finally, we present a range of numerical experiments to show the convergence of the method and its conservation properties.\\}

\textbf{Keywords:} Time-fractional diffusion equation, Conservation laws, Spectral methods, Nonlinear diffusion.\\

\textbf{MSC Classification:} 35R11, 26A33, 65M70, 65R20, 35B06

\section{Introduction}

Conservation laws for differential equations play an important role both in the comprehension of the problem and in the analysis of the mathematical model. A conservation law relates the variation of a certain quantity within an arbitrarily small section of the space domain, to the amount of quantity that flows in and out. This quantity often has a physical meaning, such as mass, energy, momentum, electric charge.
If the system is isolated, the total amount of the conserved quantity does not vary in the evolution of the process. Conservation laws further represent a fundamental tool to study the existence, uniqueness and stability of analytical solutions.

Several mathematical techniques have been developed to construct conservation laws of classical partial differential equations (PDEs) involving integer order derivatives only, including methods based on Noether’s theorem, the direct method, the homotopy operator method, Ibragimov’s method \cite{AncoI,AncoII,Ibra,Olver}. In addition, a great effort has been made for the numerical preservation of conservation laws \cite{cdp16,frasca19,frasca21,Mc14,Wan16,Frasca-Caccia2021,CONTE2022} and to find structure preserving methods \cite{bras21,Conte2,Dahlby,Abdi17,pat14,pat18,pat21,hairer06,mc06,san94,Limbook,vandaele11}.

In the last decade, conservation laws for fractional differential problems have been derived by suitably extending some of these known methods for PDEs. In particular, techniques that rely on generalizations of Noether’s theorem and variational Lie point symmetries have been applied to find conservation laws of fractional differential equations (FDE) with a fractional Lagrangian \cite{cw21,lhrh19}. For nonlinearly self-adjoint FDEs that do not have a Lagrangian in the classical sense, a \emph{formal} Lagrangian can be introduced and conservation laws are obtained by using modern techniques based on Lie group analysis of FDEs. This approach, proposed for the first time by Lukashchuk in 2015 \cite{Luk}, has been applied to time fractional PDEs \cite{Jafari20,lhd18,ahz21,cw21,habibi19,lhrh19,Luk} and more recently to time and space fractional PDEs (see \cite{Gupta17} and references therein).

However, to the best of our knowledge, the specialized literature still misses a study on the numerical preservation of conservation laws of fractional differential problems.

In the present paper, we consider a diffusion equation of fractional order in time, $\alpha$, and give sufficient conditions for identifying its conservation laws. Conservation laws of this equation with $0<\alpha<1$ and $1<\alpha<2$ have been obtained in \cite{Luk}. By exploiting the new result in this paper, we obtain a set of conservation laws for any value of $\alpha$.

The main original result in this paper regards the numerical preservation of these conservation laws. In particular, we show that if a numerical method satisfies a discrete analogue of the sufficient conditions introduced in the continuous setting, then it has discrete conservation laws that approximate the continuous ones. In the integer case, $\alpha=1$, finite difference methods that preserve conservation laws have been introduced in \cite{IMA}.

We propose a mixed method that combines a finite difference scheme along space with a spectral time integrator. With respect to other methods for FDEs known in literature (see e.g. \cite{ccp18,DeLuca20,DeLuca20b,milici2018,mohammadi19,moradi19,moradi20,moradi21,Pod99,vandaele19}), spectral methods present some advantages. In fact, most of these are step-by-step methods and thus they require at each time step a discretization of the long tail of the solution, arising from the hereditary nature of the fractional differential model. Thus, they are computationally expensive. Instead, spectral methods reflect the nonlocal nature of the fractional model and do not involve the discretization of the past history of the solution. Moreover, for a suitable choice of the function basis, spectral methods are exponentially convergent \cite{zk14}.

Here we consider the spectral time integrator proposed in \cite{bcdp17} for time fractional PDEs of order $\alpha\in(0,1)$. This method is here suitably extended to be applied to FDEs of arbitrary fractional order, $\alpha\in(p-1,p)$, with $p\in\mathbb{N}$. 
Discrete conservation laws satisfied by the solutions of this method are derived, and some test examples are presented to highlight the conservation and convergence property of the new scheme.

The rest of the paper is organized as follows. In Section~\ref{sec:materials}, we give some basic material and definitions of fractional differential calculus that are used in the rest of the paper. In Section~\ref{sec:contdisc}, we introduce sufficient conditions to have conservation laws in the continuous and discrete settings. Section~\ref{methsec} generalises the spectral time integrator in \cite{bcdp17} to approximate Riemann-Liouville or Caputo derivatives of arbitrary order. Considering a Riemann-Liouville derivative, in Section \ref{secmodel}, we apply this method to a finite difference discretization in space and derive its conservation laws. In Section \ref{sec:tests}, we verify on some numerical test examples the accuracy of the numerical method and its conservation laws. Finally, some conclusive remarks are drawn in Section \ref{sec:concl}.

\section{Problem setting}\label{sec:materials}

Let us consider a time fractional diffusion PDE of the form
\begin{align}\label{FPDE}
D_t^\alpha u-D_x^q K([u]_x)&=0,
\end{align}
where the symbol $[u]_x$ denotes the function $u$ and its integer derivatives in space and $$p-1<\alpha<p,\qquad p,q\in\mathbb{N},\qquad  u=u(x,t), \qquad (x,t)\in(a,b)\times(t_0,T).$$
We assume that equation (\ref{FPDE}) is complemented by suitable Dirichlet boundary conditions,
\begin{equation}\label{Dir}
u(a,t)=\chi_a(t),\qquad u(b,t)=\chi_b(t),
\end{equation}
and $p$ initial conditions.
Depending on the context, the symbol $D_t^\alpha$ denotes either the Riemann-Liouville fractional derivative,
\begin{equation}\label{RLdef}
^{\text{RL}}D_t^\alpha f=D_t^p(I_t^{p-\alpha}f),
\end{equation}
or the Caputo fractional derivative,
\begin{equation}\label{Cdef}
^{\text{C}}D_t^\alpha f= \left.I_t^{p-\alpha}(D_t^p(f)),\right.
\end{equation}
where
$$I_t^{p-\alpha}f=\frac{1}{\Gamma (p-\alpha)}\int_{t_0}^t\frac{f(\tau,x)}{(t-\tau)^{1-p+\alpha}}\,\mathrm{d}\tau$$
is the Riemann-Liouville integral and $\Gamma(z)$ is the Gamma function. These two definitions of fractional derivative are related by
\begin{equation}\label{relRLC}
^{\text{C}}D_t^\alpha f=  \phantom{.}^{\text{RL}}D_t^\alpha f - \phantom{.}^{\text{RL}}D_t^\alpha \left(\sum_{k=0}^{p-1}\frac{(t-t_0)^k}{k!}f^{(k)}(t_0)\right).
\end{equation}
Thus, if the initial configuration is of total rest the two definitions are equivalent. For more details on the theory of fractional derivatives, we refer the reader to \cite{Pod99}.

If the fractional derivative in (\ref{FPDE}) satisfies the Riemann-Liouville definition (\ref{RLdef}), the initial conditions assigned to (\ref{FPDE}) are \cite{Podlubnypaper}
\begin{equation}\label{IC}
D_t^{\alpha-k} u(x,t_0)=\gamma_k(x),\qquad k=0,\ldots,p-1.
\end{equation}
Instead, if the fractional derivative is of Caputo type (\ref{Cdef}), the initial conditions specify the initial values of the integer derivatives
\begin{equation}\label{ICC}
D_t^{k} u(x,t_0)=\gamma_k(x),\qquad k=0,\ldots,p-1.
\end{equation}

Conservation laws for time fractional diffusion problems have been object of several papers, e.g. \cite{cw21,habibi19,Luk,lhd18,lhrh19}. However, only papers \cite{Luk,lhd18} treat equation of type \eqref{FPDE} defined on a 3D and a 1D space, respectively.

\section{Continuous and discrete con\-ser\-va\-tion laws}\label{sec:contdisc}

\subsection*{Continuous setting}
A conservation law of \eqref{FPDE} is a total divergence,
\begin{equation}\label{CLaw}
D_x(F(x,t,[u]_\alpha))+D_t(G(x,t,[u]_\alpha))
\end{equation}
that vanishes on solutions of \eqref{FPDE}. Functions $F$ and $G$ are called the flux and the density of the conservation law (\ref{CLaw}), respectively. The symbol $[u]_\alpha$ denotes the function $u$, its fractional and integer derivatives and its fractional integrals. Differently from \cite{Luk}, in this paper we assume that $G$ depends on fractional integrals of order $p-\alpha$ only. In fact, integrals of higher order should be treated as new integral variables. Moreover, this is consistent with the limit case of $\alpha$ integer, where $F$ and $G$ are assumed to depend on $u$ and its partial derivatives but not on its integrals \cite{Olver}.

When the boundary conditions are conservative (e.g., periodic) integration in space of (\ref{CLaw}) yields,
$$D_t\int_a^b G(x,t,[u]_\alpha)\,\mathrm{d}x=0,$$
therefore, $$\int_a^b G(x,t,[u]_\alpha)\,\mathrm{d}x$$ is a global invariant of equation \eqref{FPDE}. However, the local conservation law (\ref{CLaw}) holds true regardless of the specific boundary conditions.

The following theorem gives sufficient conditions to identify conservation laws of equation (\ref{FPDE}).

\begin{theorem}\label{Theocont}
If $\rho(t)$ and $\bar{G}=\bar G(x,t,[u]_\alpha)$ are two functions such that
\begin{equation}\label{tclaw}
\rho(t)D_t^\alpha u=D_t(\bar G(x,t,[u]_\alpha)),
\end{equation}
then the quantities
\begin{equation}\label{xtclaw}
x^k\rho(t)(D_t^\alpha u-D_x^q K([u])), \qquad k=0,\ldots q-1,
\end{equation}
are conservation laws of (\ref{FPDE}).
\end{theorem}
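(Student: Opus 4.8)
The plan is to show that each expression in \eqref{xtclaw} is identically equal to a total divergence $D_x(F)+D_t(G)$, and then to observe that it vanishes on solutions of \eqref{FPDE} simply because it carries the PDE operator $D_t^\alpha u-D_x^qK([u])$ as a factor. Accordingly, I would fix $k$ in the range $0,\ldots,q-1$, split the corresponding expression into the two pieces $x^k\rho(t)D_t^\alpha u$ and $-x^k\rho(t)D_x^qK([u])$, and treat the time part and the space part separately.

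For the time part I would invoke the hypothesis \eqref{tclaw} directly. Since $x^k$ does not depend on $t$, it commutes with $D_t$, so
$$x^k\rho(t)D_t^\alpha u=x^kD_t(\bar G)=D_t\bigl(x^k\bar G\bigr),$$
which is already a total time derivative. This identifies the density as $G=x^k\bar G$; note that it inherits the dependence of $\bar G$ on fractional integrals of order $p-\alpha$ only, consistently with the standing assumption on densities stated below \eqref{CLaw}.

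For the space part the key tool is repeated integration by parts in the total-derivative form
$$P\,D_x^qQ=D_x\!\left(\sum_{j=0}^{q-1}(-1)^jD_x^j(P)\,D_x^{q-1-j}(Q)\right)+(-1)^qD_x^q(P)\,Q,$$
which I would apply with $P=x^k\rho(t)$ and $Q=K([u])$. Because $\rho(t)$ is $x$-independent and $x^k$ is a polynomial of degree $k\le q-1$, one has $D_x^q(x^k)=0$, so the non-divergence remainder $(-1)^qD_x^q(x^k\rho)\,K$ vanishes identically. This is precisely where the restriction $k=0,\ldots,q-1$ enters, and it is the step I expect to be the crux of the argument; the formula above itself can be checked by a routine induction on $q$ using the Leibniz rule. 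What remains is a pure $x$-derivative, so the space part equals $D_x(F)$ with $F=-\rho(t)\sum_{j=0}^{q-1}(-1)^jD_x^j(x^k)\,D_x^{q-1-j}(K([u]))$, whose arguments involve only $u$ and its integer space derivatives, as required.

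Combining the two pieces yields $x^k\rho(t)\bigl(D_t^\alpha u-D_x^qK([u])\bigr)=D_x(F)+D_t(G)$ as an identity. Since the left-hand side is a multiple of the PDE operator, it is zero on every solution of \eqref{FPDE}; hence the right-hand side is a total divergence that vanishes on solutions, which is exactly the defining property of a conservation law in \eqref{CLaw}. Letting $k$ range over $0,\ldots,q-1$ then establishes the claim for the whole family of expressions in \eqref{xtclaw}.
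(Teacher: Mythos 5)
Your proposal is correct and follows essentially the same route as the paper: the time part is handled by the hypothesis \eqref{tclaw} with density $G=x^k\bar G$, and the space part by repeated integration by parts, where the restriction $k\le q-1$ kills the non-divergence remainder; your flux $F=-\rho(t)\sum_{j=0}^{q-1}(-1)^jD_x^j(x^k)\,D_x^{q-1-j}(K([u]))$ coincides term by term with the paper's formula \eqref{exFG}.
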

\begin{proof}
The quantities in (\ref{xtclaw}) all vanish when $u$ is a solution of (\ref{FPDE}). Therefore, we only need to prove that these quantities can be written in the form (\ref{CLaw}).

Taking into account that $\rho$ and $\bar{G}$ satisfy (\ref{tclaw}), we obtain,
\begin{align*}
x^k\rho(t)(D_t^\alpha u-D_x^q K([u]))=&\, D_t(x^k\bar G(x,t,[u]_\alpha))-x^kD_x^q(\rho(t)K([u]))
\end{align*}
that, integrating by parts, can be written as a total divergence (\ref{CLaw}) with
\begin{equation}\label{exFG}
 F=\sum_{\ell=0}^k (-1)^{\ell+1}\frac{k!}{(k-\ell)!}x^{k-\ell}D_x^{q-1-\ell}(\rho(t)K([u])),\quad G=x^k\bar G,
\end{equation}
and $k=0,\ldots,q-1$.
\end{proof}
\begin{corollary}\label{coroll}
Equation (\ref{FPDE}) with $D_t^\alpha=\,^{\text{RL}}D_t^\alpha$ has at least $p\cdot q$ conservation laws given by (\ref{CLaw}) where $F$ and $G$ are defined by (\ref{exFG}) with $$\rho(t)=t^j,\qquad \bar G=\sum_{i=0}^{j}(-1)^i\frac{j!}{(j-i)!}t^{j-i}D_t^{p-i-1}(I_t^{p-\alpha}u),\qquad j=0,\ldots p-1. $$
\end{corollary}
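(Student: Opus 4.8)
The plan is to deduce the corollary directly from Theorem~\ref{Theocont}. For each fixed $j\in\{0,\ldots,p-1\}$, it suffices to check that the pair $\rho(t)=t^j$ together with the proposed $\bar G$ satisfies the single hypothesis (\ref{tclaw}), i.e. $t^j D_t^\alpha u = D_t(\bar G)$. Once this identity holds for a given $j$, Theorem~\ref{Theocont} immediately supplies the $q$ conservation laws indexed by $k=0,\ldots,q-1$, with flux and density read off from (\ref{exFG}). Letting $j$ range over its $p$ admissible values then produces the asserted family of $p\cdot q$ conservation laws, so the whole corollary rests on verifying one telescoping identity.

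To establish (\ref{tclaw}) in the Riemann--Liouville case, I would first use the definition (\ref{RLdef}) to write $D_t^\alpha u = D_t^p(I_t^{p-\alpha}u)$ and abbreviate $v:=I_t^{p-\alpha}u$. The density then reads $\bar G=\sum_{i=0}^{j}(-1)^i\frac{j!}{(j-i)!}t^{j-i}D_t^{p-i-1}v$, and the target becomes $t^j D_t^p v = D_t(\bar G)$. Differentiating $\bar G$ term by term with the product rule splits $D_t(\bar G)$ into two sums: one from differentiating the power $t^{j-i}$, giving a factor $(j-i)$ and lowering the power, and one from differentiating $D_t^{p-i-1}v$, raising the order of the time derivative.

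The crux is to show that these two sums cancel. In the first sum the factor $(j-i)$ annihilates the $i=j$ term and collapses the coefficient via $\frac{j!}{(j-i)!}(j-i)=\frac{j!}{(j-i-1)!}$; after shifting the summation index by one, this sum coincides with the negative of the tail $i=1,\ldots,j$ of the second sum. Hence everything cancels except the $i=0$ contribution of the second sum, which is precisely $t^j D_t^p v=t^j D_t^\alpha u$. This is the only genuine computation, and it is exactly where I expect the bookkeeping of indices, factorials, and alternating signs to demand the most care; the rest is an immediate invocation of Theorem~\ref{Theocont}.

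Finally, to justify the phrase \emph{at least} $p\cdot q$, I would note that the $p$ choices of $j$ and the $q$ choices of $k$ yield genuinely distinct densities $G=x^k\bar G$, since the factors $x^k$ and the differing $t$-polynomial structure of $\bar G$ prevent the family from collapsing; any further conservation laws of (\ref{FPDE}) only reinforce the stated lower bound.
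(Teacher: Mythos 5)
Your proposal is correct and follows essentially the same route as the paper: both reduce the corollary to verifying the hypothesis (\ref{tclaw}) of Theorem~\ref{Theocont} for $\rho(t)=t^j$ and then count $q$ conservation laws for each of the $p$ values of $j$. The only difference is one of direction — the paper constructs $\bar G$ by integrating $t^jD_t^p(I_t^{p-\alpha}u)$ by parts $j$ times (treating $j=0$ separately via the observation that the equation is itself a conservation law), whereas you verify the same identity by differentiating the given $\bar G$ and telescoping, which is the identical computation read backwards.
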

\begin{proof}
It follows from the definition of Riemann-Liouville fractional derivative \eqref{RLdef} that equation (\ref{FPDE}) is itself a conservation law.
In fact, it can be written in the form (\ref{CLaw}) with
\begin{equation}\label{eqclaw}
F=-D_x^{q-1}K(u),\qquad G=D_t^{p-1}(I_t^{p-\alpha}u)=\left\{\begin{array}{cc}I_t^{1-\alpha}u,&\text{if}\quad p=1,\\
^{RL}D_t^{\alpha-1}u,&\text{if}\quad p>1.\end{array}\right.
\end{equation}
As a consequence, Theorem~\ref{Theocont} holds true with $\rho(t)=1$ and $\bar G=G$. Hence, equation (\ref{FPDE}) has at least $q$ conservation laws given in (\ref{xtclaw}). If $p=1$, the statement is proved. If $p>1$,
\begin{equation}
\rho(t)D_t^\alpha u:= t^jD_t^\alpha u=t^jD_t^{p}(I_t^{p-\alpha}u)=D_t(\bar G),\qquad j=1,\ldots, p-1,
\end{equation}
where the last equality is obtained after integrating by parts $j$ times with
\begin{equation}\label{Gbar}
\bar G=\sum_{i=0}^{j}(-1)^i\frac{j!}{(j-i)!}t^{j-i}D_t^{p-i-1}(I_t^{p-\alpha}u), \qquad j=1,\ldots,p-1.
\end{equation}
Therefore, it follows from  Theorem~\ref{Theocont} that for each $j=0,\ldots,p-1,$ there are $q$ conservation laws with flux and densities defined by
\begin{equation}\label{rhot}
F=\sum_{\ell=0}^k (-1)^{\ell+1}\frac{k!}{(k-\ell)!}x^{k-\ell}D_x^{q-1-\ell}(t^j K([u])),\quad G=x^k\bar G,\quad k=0,1,\ldots,q-1,
\end{equation}
with $\bar {G}$ given in (\ref{Gbar}), and so a total of at least $p\cdot q$ conservation laws.
\end{proof}
\begin{remark}
The function $\bar G$ in (\ref{Gbar}) can be equivalently written as
\begin{align*}
&\bar G=\sum_{i=0}^{j}(-1)^i\frac{j!}{(j-i)!}t^{j-i}D_t^{\alpha-i-1}u,& \text{\,\,if\,\,}j<p-1,\\
&\bar G=\sum_{i=0}^{p-2}(-1)^i\frac{(p-1)!}{(p-i-1)!}t^{p-i-1}D_t^{\alpha-i-1}u+(-1)^{p-1}{(p-1)!}I_t^{p-\alpha}u,& \text{\,\,if\,\,}j=p-1.
\end{align*}
\end{remark}

\begin{remark}
We observe that for $\alpha=p=1$ and $q=2$, (\ref{tclaw}) and (\ref{xtclaw}) with $\rho(t)=1$ are the two conservation laws given in \cite{Ibra}.
\end{remark}

\subsection*{Discrete setting}
In order to define a numerical approximation of (\ref{FPDE}) we define a uniform spatial grid with nodes,
\begin{equation}\label{spgrid}
x_i=a+i\Delta x,\qquad i=0,\ldots,M+1,\qquad \Delta x=\frac{b-a}{M+1}.
\end{equation}
Considering that at the endpoints the solution is known from the boundary conditions (\ref{Dir}), we define the vector of the approximations
\begin{equation}\label{Uvec}
\mathbf{u}=\mathbf{u}(t)\in\mathbb{R}^{M},\qquad \mathbf{u}_i(t)\simeq u(x_i,t),\quad i=1,\ldots,M.
\end{equation}
We denote with $D_{\Delta x}$ the forward difference operators in space and with $D^{(q)}_{\Delta x}$ the second-order centred difference operator for the $q$-th derivative.

We consider here semidiscretizations of the form
\begin{equation}\label{SDeq}
D_t^\alpha \mathbf{u}-D_{\Delta x}^{(q)}\widetilde{K}(\mathbf{u})=0,
\end{equation}
where $\widetilde{K}\approx K$ is here arbitrary, and it can be defined in such a way to obtain accuracy in space of arbitrary order. In fact, high-order finite difference approximations of the $q$-th derivative are defined on larger stencils and are obtained combining $D_{\Delta x}^{(q)}$ with suitable averaging operators (see e.g. formulae in \cite{Pier}).

Let be
\begin{equation}\label{tjnodes}
t_0<t_1<\ldots<t_{N-1}<t_N=T, \quad t_{j+1}=t_j+\Delta t_j, \quad j=0,\ldots,N-1,
\end{equation}
the nodes in time and $D_{\Delta t_j}$ the forward difference operator with step $\Delta t_j$. For simplicity of notation, henceforth we omit the subscript $j$ in the time difference operator. We denote with $u_{i,j}$ and $D_{\Delta t}^\alpha u_{i,j}$ the approximations of $u(x_i,t_j)$ and of $D_{t}^\alpha u(x_i,t_j)$, respectively, obtained after applying a suitable time integrator to (\ref{SDeq}). Hence, the fully discrete scheme for (\ref{FPDE}) is
\begin{equation}\label{spect}
D_{\Delta t}^\alpha u_{i,j}-D_{\Delta x}^{(q)} \widetilde{K}(u_{i,j})=0.
\end{equation}
Theorem~\ref{theodisc} will give sufficient conditions that method (\ref{spect}) has to satisfy to have discrete conservation laws in the form
$$D_{\Delta x}\widetilde{F}(x_i,t_j,u_{i,j})+D_{\Delta t}\widetilde{G}(x_i,t_j,u_{i,j})=0,$$
where $\widetilde{F}$ and $\widetilde{G}$ are suitable discretizations of the flux and density of a selected continuous conservation law, respectively.
This result is a discrete version of Theorem~\ref{Theocont}, and the main contribution in this section. For its proof, we recur to the following lemma that can be proved by straightforward calculations.
\begin{Lemma}\label{leibniz}
For any two discrete functions $f$ and $g$, the following discrete versions of Leibniz rule hold true:
\begin{align}\label{Leibdisc1}
-f_{i,j}D_{\Delta x}^{(1)}g_{i,j}=&\,g_{i,j}D_{\Delta x}^{(1)}f_{i,j}+D_{\Delta x}(-\tfrac{1}2(f_{i-1,j}g_{i,j}+f_{i,j}g_{i-1,j})),\\\label{Leibdisc2}
-f_{i,j}D_{\Delta x}^{(2)}g_{i,j}=&\,-g_{i,j}D_{\Delta x}^{(2)}f_{i,j}+D_{\Delta x}(\tfrac{1}{\Delta x}(f_{i,j}g_{i-1,j}-f_{i-1,j}g_{i,j})).
\end{align}
Moreover, for $q>2$,
\begin{equation*}
-f_{i,j}D_{\Delta x}^{(q)}g_{i,j}=-(-1)^q g_{i,j}D_{\Delta x}^{(q)}f_{i,j}+D_{\Delta x}(\widetilde{F}),\
\end{equation*}
where the function $\widetilde F$ is obtained by iterating (\ref{Leibdisc2}) $\lambda$ times, if $q=2\lambda$, or (\ref{Leibdisc2}) $\lambda$ times and (\ref{Leibdisc1}) once, if $q=2\lambda+1$.
\end{Lemma}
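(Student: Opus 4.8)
The plan is to treat (\ref{Leibdisc1}) and (\ref{Leibdisc2}) as base cases, proved by direct computation, and to obtain the general $q>2$ identity by an iteration that repeatedly transfers difference operators from $g$ to $f$. First I would verify the two base identities by substituting the definitions of the forward difference $D_{\Delta x}$ and of the centred operators $D_{\Delta x}^{(1)}$ and $D_{\Delta x}^{(2)}$, writing each side over a common denominator: the cross terms $f_{i+1,j}g_{i,j}$ and $f_{i-1,j}g_{i,j}$ (and their analogues) cancel against the first term on the right, leaving exactly the telescoping quantity inside $D_{\Delta x}(\cdot)$. These are the ``straightforward calculations'' of the statement, involving no idea beyond bookkeeping. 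In operator terms both identities are discrete summation-by-parts relations: the centred second difference is self-adjoint up to a total difference, while the centred first difference is skew-adjoint up to a total difference.

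The key structural ingredient for $q>2$ is a factorisation of the centred operator. Since $D_{\Delta x}^{(1)}$ and $D_{\Delta x}^{(2)}$ are constant-coefficient operators, hence polynomials in the shift $E u_{i,j}=u_{i+1,j}$, they commute, and the standard second-order centred approximation of the $q$-th derivative factors as $D_{\Delta x}^{(q)}=(D_{\Delta x}^{(2)})^{\lambda}$ when $q=2\lambda$, and as $D_{\Delta x}^{(q)}=D_{\Delta x}^{(1)}(D_{\Delta x}^{(2)})^{\lambda}$ when $q=2\lambda+1$. I would check this factorisation once at the level of symbols, since it is precisely what licenses the prescription ``iterating (\ref{Leibdisc2}) $\lambda$ times'' in the statement.

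With the factorisation in hand I would run the iteration. Writing $-f_{i,j}D_{\Delta x}^{(q)}g_{i,j}=-f_{i,j}D_{\Delta x}^{(2)}\big((D_{\Delta x}^{(2)})^{\lambda-1}g\big)_{i,j}$, one application of (\ref{Leibdisc2}) moves a single factor $D_{\Delta x}^{(2)}$ from $g$ onto $f$ and produces a term $D_{\Delta x}(\cdot)$; repeating this $\lambda$ times transfers all of them, leaving $-g_{i,j}(D_{\Delta x}^{(2)})^{\lambda}f_{i,j}=-g_{i,j}D_{\Delta x}^{(q)}f_{i,j}$ plus a finite sum of total differences. For $q=2\lambda+1$ one additionally peels off the outer $D_{\Delta x}^{(1)}$ by a single use of (\ref{Leibdisc1}), commutativity ensuring that the residual operator on $f$ is again $D_{\Delta x}^{(q)}$. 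Because each $D_{\Delta x}(\cdot)$ contribution adds to a running flux and a sum of total differences is itself a total difference, all these contributions collapse into one $D_{\Delta x}(\widetilde F)$, with $\widetilde F$ assembled explicitly from the brackets in (\ref{Leibdisc1})--(\ref{Leibdisc2}) at each stage.

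The point requiring care, and the only genuine obstacle, is tracking the sign. Each application of (\ref{Leibdisc2}) preserves the sign of the leading term, since it sends $-u\,D_{\Delta x}^{(2)}v$ to $-v\,D_{\Delta x}^{(2)}u$, whereas the single application of (\ref{Leibdisc1}), used exactly when $q$ is odd, flips it, sending $-u\,D_{\Delta x}^{(1)}v$ to $+v\,D_{\Delta x}^{(1)}u$. Consequently the leading term ends as $-g_{i,j}D_{\Delta x}^{(q)}f_{i,j}$ when $q$ is even and as $+g_{i,j}D_{\Delta x}^{(q)}f_{i,j}$ when $q$ is odd, that is with coefficient $-(-1)^{q}$, exactly as claimed. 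I would cross-check this parity count against the continuous integration-by-parts identity $\int f\,g^{(q)}=(-1)^{q}\int f^{(q)}g$, of which the lemma is the discrete analogue.
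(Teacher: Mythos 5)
Your proposal is correct and follows exactly the route the paper intends: the paper gives no written proof (it only asserts the lemma ``can be proved by straightforward calculations''), and the lemma's own statement prescribes precisely the iteration you carry out, with the base identities \eqref{Leibdisc1}--\eqref{Leibdisc2} checked by direct expansion and the case $q>2$ obtained from the factorisation $D_{\Delta x}^{(q)}=(D_{\Delta x}^{(2)})^{\lambda}$ or $D_{\Delta x}^{(1)}(D_{\Delta x}^{(2)})^{\lambda}$. Your sign bookkeeping ($-(-1)^q$ from the single parity-flipping use of \eqref{Leibdisc1} when $q$ is odd) is also right.
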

\begin{theorem}\label{theodisc}
For all $\rho(t_j)$ and $\bar{G}=\bar{G}(x_i,t_j,u_{i,j})$ such that
\begin{equation}\label{disctclaw}
\rho(t_j)D_{\Delta t}^\alpha u_{i,j}=D_{\Delta t}(\bar{G}(x_i,t_j,u_{i,j})),
\end{equation}
the quantities
\begin{equation}\label{discxtclaw}
x_i^k\rho(t_j)(D_{\Delta t}^\alpha u_{i,j}-D_{\Delta x}^{(q)} \widetilde K(u_{i,j})), \qquad k=0,\ldots q-1,
\end{equation}
are conservation laws of (\ref{spect}) at the point $(x_i,t_j)$ that approximate their continuous counterparts with the same accuracy of the method.
\end{theorem}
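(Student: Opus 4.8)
The plan is to mirror the proof of Theorem~\ref{Theocont} in the discrete setting, with Lemma~\ref{leibniz} playing the role of integration by parts. Exactly as in the continuous case, each quantity in \eqref{discxtclaw} vanishes identically whenever $u_{i,j}$ solves the fully discrete scheme \eqref{spect}, so the only thing left to establish is that \eqref{discxtclaw} can be rewritten as a discrete total divergence $D_{\Delta x}(\widetilde F)+D_{\Delta t}(\widetilde G)$. I would therefore split \eqref{discxtclaw} into its time part $x_i^k\rho(t_j)D_{\Delta t}^\alpha u_{i,j}$ and its space part $-x_i^k\rho(t_j)D_{\Delta x}^{(q)}\widetilde K(u_{i,j})$ and treat them separately.

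For the time part I would invoke the hypothesis \eqref{disctclaw} to replace $\rho(t_j)D_{\Delta t}^\alpha u_{i,j}$ by $D_{\Delta t}(\bar G)$. Since $x_i^k$ does not depend on the time index $j$, it can be pulled inside the forward difference operator, giving $x_i^k\rho(t_j)D_{\Delta t}^\alpha u_{i,j}=D_{\Delta t}(x_i^k\bar G)$. This identifies the discrete density as $\widetilde G=x_i^k\bar G$, the exact analogue of $G=x^k\bar G$ in \eqref{exFG}.

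For the space part I would apply Lemma~\ref{leibniz} with $f_{i,j}=x_i^k\rho(t_j)$ and $g_{i,j}=\widetilde K(u_{i,j})$. This produces a genuine divergence term $D_{\Delta x}(\widetilde F)$, whose flux $\widetilde F$ approximates the $F$ of \eqref{exFG}, together with a leftover term proportional to $\widetilde K(u_{i,j})\,D_{\Delta x}^{(q)}(x_i^k\rho(t_j))$. Because $\rho(t_j)$ is constant in the spatial index, this leftover factors as $\rho(t_j)\,\widetilde K(u_{i,j})\,D_{\Delta x}^{(q)}(x_i^k)$. The crux of the argument, and the step I expect to require the most care, is showing that this leftover vanishes identically for $k=0,\ldots,q-1$. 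This is the discrete counterpart of the elementary fact $D_x^q(x^k)=0$ for $k<q$ that is used implicitly when assembling \eqref{exFG}. I would justify it from consistency of the centred scheme: any finite difference operator approximating the $q$-th derivative must reproduce it exactly on polynomials of degree at most $q$, so applied to $x_i^k$ with $k<q$ it returns the exact $q$-th derivative, namely zero, and hence $D_{\Delta x}^{(q)}(x_i^k)=0$ exactly (not merely to leading order). With the leftover gone, combining the two parts yields \eqref{discxtclaw} $=D_{\Delta x}(\widetilde F)+D_{\Delta t}(x_i^k\bar G)$, the required discrete conservation law.

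Finally, for the accuracy claim I would observe that $\widetilde G=x_i^k\bar G$ and the flux $\widetilde F$ delivered by Lemma~\ref{leibniz} are built solely from the discrete operators $D_{\Delta t}^\alpha$, $D_{\Delta x}^{(q)}$, $D_{\Delta x}$ and the spatial discretization $\widetilde K$, each of which approximates its continuous counterpart to the order of the method. Consequently $\widetilde F$ and $\widetilde G$ approximate the continuous flux and density $F,G$ of \eqref{exFG} to that same order, and therefore so does the resulting discrete conservation law.
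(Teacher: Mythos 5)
Your proposal is correct and follows essentially the same route as the paper's proof: vanishing on solutions of \eqref{spect}, the hypothesis \eqref{disctclaw} to write the time part as $D_{\Delta t}(x_i^k\bar G)$, Lemma~\ref{leibniz} as discrete summation by parts for the space part, and the observation that the leftover term $D_{\Delta x}^{(q)}(x_i^k)$ vanishes for $k<q$ on the uniform grid. Your justification of that last step via exactness of consistent difference operators on low-degree polynomials is in fact slightly more explicit than the paper's one-line remark.
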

\begin{proof}
The proof follows along similar lines as that of Theorem~\ref{Theocont}. Multiplying method (\ref{spect}) by $x_i^k\rho(t_j)$, with $k=0,1,\ldots,q-1$, yields
\begin{equation}\label{discClaws}
x_i^k\rho(t_j)(D_{\Delta t}^\alpha u_{i,j}-D_{\Delta x}^{(q)}\widetilde{K}(u_{i,j})).
\end{equation}
These quantities clearly vanish on solutions of (\ref{spect}). Moreover, considering (\ref{disctclaw}) and Lemma~\ref{leibniz},
\begin{align*}
x_i^k\rho(t_j)&(D_{\Delta t}^\alpha u_{i,j}\,-D_{\Delta x}^{(q)} \widetilde{K}(u_{i,j}))= D_{\Delta t}(x_i^k\bar{G}(x_i,t_j,u_{i,j}))-x_i^kD_{\Delta x}^{(q)}(\rho(t_j)\widetilde{K}(u_{i,j}))\\
=&\,D_{\Delta t}(x_i^k\bar{G}(x_i,t_j,u_{i,j}))-((-1)^q D_{\Delta x}^{(q)}x_i^k)\rho(t_j)\widetilde K(u_{i,j})+D_{\Delta x}(\widetilde F(x_i,t_j,u_{i,j})).
\end{align*}
Then, as the spatial grid is uniform and $k<q$,
$$x_i^k\rho(t_j)(D_{\Delta t}^\alpha u_{i,j}-D_{\Delta x}^{(q)} \widetilde K(u_{i,j}))=D_{\Delta t}(\widetilde G)+D_{\Delta x}(\widetilde F),$$
where $\widetilde G=x_i^k\bar{G}$ and $\widetilde F \approx F$ in (\ref{exFG}). As the function $x_i^k\rho(t_j)$ is exactly evaluated at the nodes, a conservation law in the form (\ref{discxtclaw}) approximates its continuous limit (\ref{xtclaw}) with the same accuracy of the scheme.
\end{proof}

\section{The time integrator}\label{methsec}

Given the space discretization \eqref{SDeq}, we perform the time integration by using the spectral method introduced in \cite{bcdp17} for fractional problems of order $\alpha$ with $0<\alpha<1$. This method is here generalized to deal with equations of arbitrary fractional order. We separate the treatment of equations with fractional derivative satisfying Riemann-Liouville or Caputo definition with a focus on the particular case of zero initial conditions.

\subsection{Riemann-Liouville fractional derivative}\label{sec:RL}
When the fractional derivative $D_t^\alpha$ in (\ref{FPDE}) satisfies the Riemann-Liouville definition (\ref{RLdef}), we look for time approximations of the solution \eqref{Uvec} and of the initial conditions (\ref{IC}) at the node $x_i$, in the form
\begin{align}\label{modalexp}
u_N^i(t)=&\,\sum_{j=0}^{N+p} \hat u_j^i\mathcal{P}_j(t),\\
\label{discIC}
D_t^{\alpha-k} u_N^i(t_0)=&\,\sum_{j=0}^{N+p} \hat u_j^iD_t^{\alpha-k}\mathcal{P}_j(t_0)=\gamma_k(x_i),\qquad k=0,\ldots,p-1,
\end{align}
respectively, where $\{\mathcal P_j(t)\}_{j=0}^{N+p}$ is a suitable functional basis and $\hat u_j^i$ are unknown coefficients.

By considering the set of collocation points (\ref{tjnodes}), we can equivalently write equation (\ref{modalexp}) as
\begin{equation}\label{nodalexp}
u_N^i(t)=\sum_{k=0}^{N} \varphi_k(t)u_N^i(t_k)+\sum_{k=0}^{p-1}\varphi_{N+k+1}(t)\gamma_{k}(x_i),
\end{equation}
where the functions $\varphi_k(t)$ are unknown. By defining
$$\psi_k(t)=D_t^\alpha\varphi_k(t),\qquad k=0,\ldots,N+p,$$
expansion (\ref{nodalexp}) gives the following approximation of the time fractional derivative of $\mathbf{u}_i$ defined in \eqref{Uvec} at the collocation points:
\begin{equation}\label{fullapp}
D_t^\alpha \mathbf{u}_i(t_j)\approx D_t^\alpha  u_N^i(t_j)=\sum_{k=0}^{N}\psi_k(t_j)u_N^i(t_k)+\sum_{k=0}^{p-1}\psi_{N+k+1}(t_j)\gamma_k(x_i)=: D_{\Delta t}^\alpha u_{i,j}.
\end{equation}
In order to determine the functions $\psi_k(t)$, and so to practically compute approximation (\ref{fullapp}), we define the following two vectors in $\mathbb{R}^{N+p+1}$,
\begin{align*}
\uu_N^i&\,=\left(u_N^i(t_0),\ldots,u_N^i(t_N),\gamma_0(x_i),\ldots,\gamma_{p-1}(x_i)\right)^T,\\
\hat\uu^i&\,=\left(\hat u^i_0,\ldots,\hat u_{N+p}^i\right)^T,
\end{align*}
the matrices of dimension $N+p+1$,
\begin{equation}\label{matA}
A=\left(\begin{array}{c} A_1\\A_2\end{array}\right),\qquad B=A^{-1},
\end{equation}
with
\begin{equation*}
{A}_1=\left(\begin{array}{ccc}
\mathcal{P}_0(t_0) & \cdots & \mathcal{P}_{N+p}(t_0)\\
\vdots & & \vdots\\
\mathcal{P}_0(t_N) & \cdots & \mathcal{P}_{N+p}(t_N)
\end{array}\right), \
{A}_2=\left(\begin{array}{ccc}
D_t^\alpha \mathcal{P}_0(t_0) & \cdots & D_t^{\alpha} \mathcal{P}_{N+p}(t_0)\\
\vdots & & \vdots\\
D_t^{\alpha-p+1} \mathcal{P}_0(t_0) & \cdots & D_t^{\alpha-p+1} \mathcal{P}_{N+p}(t_0)\\
\end{array}\right).
\end{equation*}
and the matrices
\begin{align}\label{matC}
C=\left(\begin{array}{ccc}
\psi_0(t_0) & \cdots & \psi_{N+p}(t_0)\\
\vdots & & \vdots\\
\psi_0(t_N) & \cdots & \psi_{N+p}(t_N)
\end{array}\right), \
\mathcal{P}=\left(\begin{array}{ccc}
D_t^\alpha\mathcal{P}_0(t_0) & \cdots & D_t^\alpha\mathcal{P}_{N+p}(t_0)\\
\vdots & & \vdots\\
D_t^\alpha\mathcal{P}_0(t_N) & \cdots & D_t^\alpha\mathcal{P}_{N+p}(t_N)
\end{array}\right).
\end{align}
\begin{prop}
Matrix C can be computed as
\begin{equation}\label{matsys}
C=\mathcal PB.
\end{equation}
\end{prop}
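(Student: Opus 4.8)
The plan is to recognise that the nodal basis $\{\varphi_l\}_{l=0}^{N+p}$ is nothing but the image of the modal basis $\{\mathcal{P}_j\}_{j=0}^{N+p}$ under the coordinate change $B=A^{-1}$, and then to differentiate this relation with the operator $D_t^\alpha$ and sample it at the collocation nodes.

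First I would assemble the linear system linking the modal coefficients $\hat\uu^i$ to the data vector $\uu_N^i$. Imposing the collocation identity $u_N^i(t_k)=\sum_{j}\hat u_j^i\mathcal{P}_j(t_k)$ for $k=0,\ldots,N$ reproduces, row by row, the block $A_1$, while imposing the initial conditions (\ref{discIC}), $D_t^{\alpha-m}u_N^i(t_0)=\sum_j \hat u_j^i D_t^{\alpha-m}\mathcal{P}_j(t_0)=\gamma_m(x_i)$ for $m=0,\ldots,p-1$, reproduces the block $A_2$. Stacking these gives $\uu_N^i=A\hat\uu^i$, so that $\hat\uu^i=B\uu_N^i$ with $B=A^{-1}$; equivalently $\hat u_j^i=\sum_{l=0}^{N+p}B_{jl}(\uu_N^i)_l$.

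Next I would substitute this expression for $\hat u_j^i$ into the modal expansion (\ref{modalexp}) and collect the coefficient of each component $(\uu_N^i)_l$, obtaining $u_N^i(t)=\sum_l (\uu_N^i)_l\sum_j B_{jl}\mathcal{P}_j(t)$. Comparing with the nodal expansion (\ref{nodalexp}), whose coefficient of $(\uu_N^i)_l$ is precisely $\varphi_l(t)$ (with $\varphi_{N+k+1}$ accompanying $\gamma_k(x_i)$), and using that the data vector $\uu_N^i$ may be chosen arbitrarily in $\mathbb{R}^{N+p+1}$, I would identify $\varphi_l(t)=\sum_j B_{jl}\mathcal{P}_j(t)$ for every $l$. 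Applying $D_t^\alpha$ term by term then yields $\psi_l(t)=D_t^\alpha\varphi_l(t)=\sum_j B_{jl}\,D_t^\alpha\mathcal{P}_j(t)$.

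Finally, evaluating this at the nodes $t_k$, $k=0,\ldots,N$, gives $C_{kl}=\psi_l(t_k)=\sum_j\big(D_t^\alpha\mathcal{P}_j(t_k)\big)B_{jl}=(\mathcal{P}B)_{kl}$, where I read off the entries of $C$ and $\mathcal{P}$ from (\ref{matC}); this is exactly $C=\mathcal{P}B$. The argument is essentially bookkeeping once the duality $\varphi_l=\sum_j B_{jl}\mathcal{P}_j$ is in place, so I do not expect a genuine obstacle; the two points that require care are the implicit hypothesis that $A$ is nonsingular (i.e.\ that $\{\mathcal{P}_j\}$ is unisolvent with respect to the $N+1$ collocation functionals together with the $p$ fractional initial-value functionals at $t_0$), which is what makes $B$ well defined, and the index matching between the collocation block and the initial-condition block when passing from the two expansions to the single relation for $\varphi_l$.
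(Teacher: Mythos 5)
Your proposal is correct and follows essentially the same route as the paper: invert the collocation-plus-initial-condition system $\uu_N^i=A\hat\uu^i$, substitute $\hat\uu^i=B\uu_N^i$ into the modal expansion, identify $\varphi_k=\sum_j B_{j+1,k+1}\mathcal{P}_j$ by comparison with the nodal expansion, and apply $D_t^\alpha$ and evaluate at the nodes. Your explicit remarks on the nonsingularity of $A$ and on the arbitrariness of the data vector are sensible refinements of the same argument, not a different approach.
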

\begin{proof}
By evaluating (\ref{modalexp}) at the nodes $t_j$, with $j=0,\ldots,N$, we obtain a set of $N+1$ equations that together with (\ref{discIC}) forms an algebraic system that can be equivalently written as
$$\uu_N^i=A\hat\uu^i,$$
therefore,
$$\hat\uu^i=B\uu_N^i,$$
or, entry-wise,
$$\hat u^i_j=\sum_{k=0}^N B_{j+1,k+1}u_N^i(t_k)+\sum_{k=0}^{p-1}B_{j+1,N+k+2}\gamma_{k}(x_i),\qquad j=0,\ldots,N+p.$$
Substituting in (\ref{modalexp}),
\begin{align}\nonumber
u_N^i(t)=&\,\sum_{j=0}^{N+p} \mathcal{P}_j(t)\left(\sum_{k=0}^N B_{j+1,k+1}u_N^i(t_k)+\sum_{k=0}^{p-1}B_{j+1,N+k+2}\gamma_k(x_i)\right)\\\label{tophi}
=&\,\sum_{k=0}^N\left(\sum_{j=0}^{N+p} B_{j+1,k+1}\mathcal{P}_j(t) \right)u_N^i(t_k)+\sum_{k=0}^{p-1}\left(\sum_{j=0}^{N+p}B_{j+1,N+k+2}\mathcal{P}_j(t) \right)\gamma_{k}(x_i).
\end{align}
Comparing (\ref{nodalexp}) and (\ref{tophi}), we find that
$$\varphi_k(t)=\sum_{j=0}^{N+p} B_{j+1,k+1}\mathcal{P}_j(t), \qquad k=0,\ldots, N+p,$$
and, differentiating,
\begin{equation}\label{psik}
\psi_k(t)=D_t^\alpha \varphi_k(t)=\sum_{j=0}^{N+p} B_{j+1,k+1}D_t^\alpha \mathcal{P}_j(t), \qquad k=0,\ldots, N+p.
\end{equation}
The values $\psi_k(t_j)$ are then obtained as
\begin{equation}\label{psiktj}
\psi_k(t_j)=\sum_{\ell=0}^{N+p} B_{\ell+1,k+1}D_t^\alpha \mathcal{P}_\ell(t_j), \quad k=0,\ldots, N+p, \ j=0,\ldots,N.
\end{equation}
Equation \eqref{matsys} follows immediately.
\end{proof}

Note that in order to compute $\psi_k(t_j)$ in (\ref{psiktj}) we only need the values of the chosen basis $\{\mathcal P_\ell(t)\}_{\ell=0}^{N+p}$ and its fractional derivatives at the collocation points. Therefore, the basis should be chosen such that it is easy to compute these fractional derivatives.

The fully discrete scheme for (\ref{FPDE}) is in the form (\ref{spect}) with the approximation of the fractional derivative (\ref{fullapp}) and
$$u_{i,j}:=u_N^i(t_j).$$
Let us consider matrix $C$ defined in (\ref{matC}) and let be $C=[C_1,C_2]$, with $C_1$ a square matrix of dimension $N+1$ and $C_2$ of dimension $ (N+1)\times p$. The numerical method can be written in matrix form as
\begin{equation}\label{matrixform}
C_1U=\frac{1}{\Delta x^2}(\widetilde{K}(U)\mathcal{M}+\mathcal{F})-C_2\gamma,
\end{equation}
where the entries of $U\in\mathbb{R}^{(N+1)\times M}$ and $\gamma \in\mathbb{R}^{p\times M}$ are
\begin{align*}
&U_{k,i}=u_{i,k-1}, \quad \gamma_{\ell,i}=\gamma_{\ell-1}(x_i),\quad k=1,\ldots,N+1, \ i=1,\ldots,M, \ \ell=1,\ldots p,
\end{align*}
respectively, matrix $\widetilde{K}(U)$ is obtained by applying $\widetilde{K}$ to the entries of matrix $U$, and
\begin{equation*}
\mathcal{M}=\left(\begin{array}{ccccc} -2 & 1 & && \\ 1 & -2 & 1 &&\\ &\ddots & \ddots& \ddots &\\ &&1 & -2 & 1 \\ && & 1 &-2\end{array}\right),\quad
\mathcal{F}=\left(\begin{array}{ccccc} \widetilde{K}(\chi_a(t_0)) & 0 &\ldots & 0 & \widetilde{K}(\chi_b(t_0))\\
\vdots & \vdots & &\vdots & \vdots\\
\widetilde{K}(\chi_a(t_N)) & 0 &\ldots & 0 & \widetilde{K}(\chi_b(t_N))\end{array}\right),
\end{equation*}
are matrices in $\mathbb{R}^{M\times M}$ and $\mathbb{R}^{(N+1)\times M},$ respectively.

\begin{remark}\label{remr}
In particular cases, the basis $\{\mathcal P_j(t)\}_j$ can be chosen such that $u_N^i$ satisfies $r\leq p$ initial conditions (\ref{IC}) by definition and do not need to be enforced.
In these cases, approximation (\ref{modalexp}) is taken in a projection space of dimension $N+p+1-r$. Similarly, approximation (\ref{nodalexp}) is replaced with
$$u_N^i(t)=\sum_{k=0}^{N} \varphi_k(t)u_N^i(t_k)+\sum_{k=0}^{p-1-r}\varphi_{N+k+1}(t)\gamma_{k}(x_i),$$
where (after a suitable reordering of the indexes) the second sum includes only the values $\gamma_k$ of the initial conditions to be imposed. Matrices $A$ and $B$ in \eqref{matA} have then reduced dimension $N+p+1-r$, and $\mathcal{P}$ in \eqref{matC} has dimension $(N+1)\times (N+p+1-r)$. System \eqref{matrixform} has still dimension $(N+1)\times M$ but matrices $C_2$ and $\gamma$ have dimension $(N+1)\times (p-r) $ and $(p-r)\times M$, respectively.
\end{remark}

\subsection{Caputo fractional derivative}\label{sec:Cap}
We now adapt the method introduced in Section~\ref{sec:RL} to problems in the form \eqref{FPDE} with fractional derivative satisfying Caputo's definition \eqref{Cdef}. In this case the initial conditions are given by \eqref{ICC}. In particular, the initial value of $u$ at $t=t_0$ is known, and in order to obtain a compatible system the approximated solution must be taken in a space of reduced dimension $N+p$.

Therefore, equations \eqref{modalexp} and \eqref{discIC} are replaced with
\begin{align}\label{modalexpC}
u_N^i(t)=&\,\sum_{j=0}^{N+p-1} \hat u_j^i\mathcal{P}_j(t),\\
\label{discICC}
D_t^{k} u_N^i(t_0)=&\,\sum_{j=0}^{N+p-1} \hat u_j^iD_t^{k}\mathcal{P}_j(t_0)=\gamma_k(x_i),\qquad k=0,\ldots,p-1,
\end{align}
respectively.
With collocation points,
$$t_1<\ldots<t_N=T,\qquad t_{j+1}=t_j+\Delta t_j, \qquad j=1,\ldots,N-1,$$
equation \eqref{modalexpC} is equivalently written as
\begin{equation}\label{nodalexpC}
u_N^i(t)=\sum_{k=1}^{N} \varphi_k(t)u_N^i(t_k)+\sum_{k=0}^{p-1}\varphi_{N+k+1}(t)\gamma_{k}(x_i),
\end{equation}
where functions $\varphi_k$ are unknown. Defining
$$\psi_k(t)=D_t^\alpha\varphi_k(t),\qquad k=1,\ldots,N+p,$$
the approximation of the fractional derivative is then given by
\begin{equation}\label{fullappC}
D_t^\alpha \mathbf{u}_i(t_j)\approx D_t^\alpha  u_N^i(t_j)=\sum_{k=1}^{N}\psi_k(t_j)u_N^i(t_k)+\sum_{k=0}^{p-1}\psi_{N+k+1}(t_j)\gamma_k(x_i)=: D_{\Delta t}^\alpha u_{i,j}.
\end{equation}
Following similar steps as in Section \ref{sec:RL}, the values
$$\psi_k(t_j),\qquad k=1,\ldots, N+p, \qquad j=1,\ldots,N,$$
can be obtained by solving \eqref{matsys} where matrices $C$ and $\mathcal{P}$ have now dimension $N\times (N+p)$ and are defined as their analogues in \eqref{matC} by deleting the first row and the first column and the first row and the last column, respectively. Matrix $B$ has dimension $(N+p)\times (N+p)$ and is defined as in \eqref{matA} after removing from $A$ the first row and the last column.

By splitting matrix $C$ as $C=[C_1,C_2]$ where $C_1$ and $C_2$ have dimension $N\times N$ and $N\times p$, respectively, and defining $U\in\mathbb{R}^{N\times M}$ with entries $U_{k,i}=u_N^i(t_k),$ $k=1,\ldots,N$, $i=1,\ldots,M$, the numerical method can be written in matrix form as
\begin{equation}\label{eq:Cmatform}
C_1 U=\frac{1}{\Delta x^2}(\widetilde{K}(U)\mathcal{M}+\mathcal{F})-C_2\gamma,
\end{equation}
where $\mathcal{M}$, $\gamma$ and $\widetilde{K}(U)$ are defined as in \eqref{matrixform} and \begin{equation*}
\mathcal{F}=\left(\begin{array}{ccccc} \widetilde{K}(\chi_a(t_1)) & 0 &\ldots & 0 & \widetilde{K}(\chi_b(t_1))\\
\vdots & \vdots & &\vdots & \vdots\\
\widetilde{K}(\chi_a(t_N)) & 0 &\ldots & 0 & \widetilde{K}(\chi_b(t_N))\end{array}\right)\in\mathbb{R}^{N\times M}.
\end{equation*}
We observe that the dimension of system \eqref{eq:Cmatform} is lower than that of system \eqref{matrixform}.

\begin{remark}\label{rem:cap}
If the basis $\{\mathcal P_j(t)\}_j$ is chosen such that $r\leq p$ initial conditions (\ref{IC}) are satisfied by $u_N^i$ an analogue discussion as in Remark~\ref{remr} holds considering a projection space of dimension $N+p-r$.
\end{remark}

\subsubsection{The case of zero initial conditions}
When the initial condition is of total rest, i.e.,
\begin{equation}\label{IC2}
D_t^k u(x,t_0)=0,\qquad k=0,\ldots p-1,
\end{equation}
the definitions of Caputo and Riemann-Liouville fractional derivative are equivalent.
Since in this special case $\gamma_k(x_i)=0$, approximation (\ref{fullappC}) reduces to
\begin{equation*}
D_t^\alpha \mathbf{u}_i(t_j)\approx D_t^\alpha  u_N^i(t_j)=\sum_{k=1}^{N}\psi_k(t_j)u_N^i(t_k)=: D_{\Delta t}^\alpha u_{i,j},
\end{equation*}
and so only the values of $\psi_k(t_j)$ for $k=1\ldots,N,$ and $j=1\ldots,N$ need to be calculated.
The matrix form of the numerical method \eqref{eq:Cmatform} reduces to
\begin{equation}\label{eq:matform0}
C_1 U=\frac{1}{\Delta x^2}(\widetilde{K}(U)\mathcal{M}+\mathcal{F}),
\end{equation}
and so it is not necessary to calculate matrix $C_2$. Matrix $C_1$ is obtained from
$$C_1={\mathcal P} \bar B,$$
where $\bar B$ is obtained removing from matrix $B$ the last $p$ columns.

Note that although the computation of the inverse of matrix $A$, having dimension $N+p$, is still required, system \eqref{eq:matform0} has reduced dimension, $N$.

\begin{remark}
The basis
\begin{equation}\label{eq:powerb}
\{\mathcal{P}_j(t)\}=\{(t-t_0)^{j\alpha}\},
\end{equation} identically satisfies all the initial conditions (\ref{IC2}) for $k=1,\ldots,p-1,$ and so Remark \ref{rem:cap} applies. In this case, then, matrix $A$ has dimension $N+1$.
\end{remark}

\section{Conservation laws of the fractional diffusion equation}\label{secmodel}
Here and henceforth we consider equation (\ref{FPDE}) with $q=2$ and Riemann-Liouville fractional derivative.

Although, as seen in Section~\ref{methsec}, the general discussion holds for methods of arbitrary order in space, in order to give some specific results and explicit formulae of the preserved conservation laws, we focus here on second-order accurate schemes. Therefore, henceforth we set $\widetilde{K}=K$ and method (\ref{spect}) reduces to
\begin{equation}\label{spectK}
D_{\Delta t}^\alpha u_{i,j}-D_{\Delta x}^{(2)} {K}(u_{i,j})=0.
\end{equation}
As stated in Corollary~\ref{coroll}, when the fractional derivative is defined according to the definition of Riemann-Liouville there are always at least two conservation laws.
The first is equivalent to equation (\ref{FPDE}) and it is defined by (see \eqref{eqclaw})
\begin{equation}\label{SDclaw1}
F_1(x,t,u)=-D_x K(u),\qquad G_1(x,t,u)=D_t^{p-1}I_t^{p-\alpha}u.
\end{equation}
The second is defined by (\ref{rhot}) with $k=1$ and $\bar{G}=G_1$, yielding,
\begin{equation}\label{SDclaw2}
F_2(x,t,u)= K(u)-xD_xK(u),\qquad G_2(x,t,u)=x G_1.
\end{equation}
We prove that method (\ref{spectK}) preserves these conservation laws for any value of $\alpha$ and $p$, and give their conserved approximations. According to Theorem~\ref{theodisc}, it suffices to prove (\ref{disctclaw}), i.e. to find $\widetilde{G}_1\approx G_1$ such that
\begin{equation}\label{cond}
D_{\Delta t}^\alpha u_{i,j}=D_{\Delta t}(\widetilde{G}_1(x_i,t_j,u_{i,j})).
\end{equation}
By integrating  both sides in (\ref{RLdef}), we have
$$G_1(x,t,u)=D_t^{p-1}I_{t}^{p-\alpha}u(x,t)=\int_{t_0}^{t}D_\tau^\alpha u(x,\tau)\,\mathrm{d}\tau.$$
We define then
\begin{equation}\label{appG1}
\widetilde G_1(x_i,t_{j},u_{i,j})=\Delta t \sum_{\ell=0}^{j-1}  D_{\Delta t}^{\alpha}u_{i,\ell},\end{equation}
so that (\ref{cond}) is satisfied. Following the steps in the proof of Theorem~\ref{theodisc}, the remaining functions that define the two discrete conservation laws are
\begin{equation}\label{discF1}
\widetilde{F}_1(x_i,t_{j},u_{i,j})=-D_{\Delta x}{K}(u_{i-1,j}),
\end{equation}
and (see (\ref{Leibdisc2})),
\begin{align}\nonumber
\widetilde{F}_2(x_i,t_{j},u_{i,j})=& \displaystyle \,\tfrac{1}{\Delta x}(x_iK(u_{i-1,j})-x_{i-1}K(u_{i,j}))\\
\nonumber =& A_{\Delta x}{K}(u_{i-1,j})\-A_{\Delta x}(x_{i-1})D_{\Delta x}K(u_{i-1,j}),\\
\label{discF2G2}\widetilde{G}_2(x_i,t_{j},u_{i,j})=&\,x_i\widetilde{G}_1(x_i,t_{j},u_{i,j}),
\end{align}
respectively, where $A_{\Delta x}$ denotes the forward average operator,
$$A_{\Delta x}(f(u_{i,j}))=\frac{f(u_{i+1,j})+f(u_{i,j})}2=f(u(x_i+\tfrac{\Delta x}2,t_j))+\mathcal{O}(\Delta x^2).$$
In the rest of this section, we focus on the two cases of subdiffusion and superdiffusion equations obtained setting $p=1$ and $p=2$, respectively.
The continuous and discrete conservation laws obtained are listed in Table~\ref{allcl}.
\subsubsection*{Subdiffusion-wave equation}
When $0<\alpha<1=p$ equation \eqref{FPDE} defines a subdiffusion problem. Using Theorem~\ref{Theocont} and Corollary~\ref{coroll} we can obtain only the two conservation laws defined by (\ref{SDclaw1}) and (\ref{SDclaw2}). Indeed, these two are the only independent conservation laws of the subdiffusion-wave equation in the generic form (\ref{FPDE}) with Riemann-Liouville fractional derivative \cite{Luk}. As shown, method (\ref{spectK}) has discrete analogues of this conservation laws defined by \eqref{appG1}--\eqref{discF2G2}, for any $p$. Some extra conservation laws are given in \cite{Luk} for special choices of the function $K$ in (\ref{FPDE}), but these depend on integrals whose order is larger than $p-\alpha$ that is a case that we do not consider in this paper.
\subsubsection*{Superdiffusion equation}
Let us consider now the superdiffusion equation defined by (\ref{FPDE}) with $1<\alpha<2=p$. In this case, according to Corollary \ref{coroll}, conservation laws are obtained from (\ref{xtclaw}) with
$$\rho(t)=1\qquad \rho(t)=t,$$
yielding four independent conservation laws. The first two are again (\ref{SDclaw1}) and (\ref{SDclaw2}) and these are preserved by method (\ref{spectK}). The other two conservation laws are defined with (see \eqref{Gbar}--(\ref{rhot})),
\begin{equation}\label{FG3}
F_3(x,t,u)=-tD_xK(u),\qquad G_3(x,t,u)=tD_t^{\alpha-1}u-I_t^{2-\alpha}u,
\end{equation}
and
\begin{equation}\label{FG4}
F_4(x,t,u)=tK(u)-xtD_xK(u),\qquad G_4(x,t,u)=xtD_t^{\alpha-1}u-xI_t^{2-\alpha}u.
\end{equation}
The density function in (\ref{FG3}) can be equivalently written as,
$$G_3(x,t,u)=tG_1(x,t,u)-\int_{t_0}^tG_1(x,z,u)\,\mathrm{d}z.$$
In fact, for $p=2$,
$$D_t^{\alpha-1}u=D_tI_t^{2-\alpha}u=G_1(x,t,u),$$
and integrating twice the Riemann-Liouville fractional derivative (\ref{RLdef}) yields,
$$I_t^{2-\alpha}u=\int_{t_0}^t\int_{t_0}^z D_\tau^\alpha u(x,\tau)\,\mathrm{d}\tau\,\mathrm{d}z=\int_{t_0}^tG_1(x,z,u)\,\mathrm{d}z.$$
We show that (\ref{disctclaw}) is satisfied by the solutions of (\ref{spectK}) with
\begin{equation}\label{discG3}
\rho(t_j)=t_j,\quad \bar{G}=\widetilde{G}_3(x_i,t_j,u_{i,j})=t_j\widetilde{G}_1(x_i,t_j,u_{i,j})-\Delta t\sum_{r=0}^{j} \widetilde G_1(x_i,t_r,u_{i,r}),
\end{equation}
where $\widetilde{G}_1(x_i,t_j,u_{i,j})$ is given in (\ref{appG1}).
In fact, equation (\ref{cond}) yields
\begin{align*}
t_jD_{\Delta t}^\alpha u_{i,j}&\,=t_jD_{\Delta t}\widetilde G_1(x_i,t_{j},u_{i,j})=D_{\Delta t}(t_j \widetilde G_1(x_i,t_{j},u_{i,j}))-\widetilde G_1(x_i,t_{j+1},u_{i,j+1})\\
&\,=D_{\Delta t}(\widetilde G_3(x_i,t_{j},u_{i,j})).
\end{align*}
Therefore, it follows from Theorem~\ref{theodisc} that method (\ref{spectK}) has other two conservation laws that approximate (\ref{FG3}) and (\ref{FG4}), and that are defined by $\widetilde{G}_3$ in  (\ref{discG3}), and
\begin{align*}
&\widetilde{F}_3(x_i,t_j,u_{i,j})=-t_jD_{\Delta x} ({K}(u_{i,j})),\qquad \widetilde{F}_4(x_i,t_{j},u_{i,j})=t_j\widetilde{F}_2(x_i,t_{j},u_{i,j}),\\& \widetilde{G}_4(x_i,t_{j},u_{i,j})=x_i\widetilde{G}_3(x_i,t_{j},u_{i,j}).
\end{align*}

\begin{table}[t]
\caption{Continuous and discrete conservation laws. Subdiffusion: $\ell=1,2$. Superdiffusion: $\ell=1,\ldots,4$.}\label{allcl}
\small
\begingroup
\setlength{\tabcolsep}{5pt}
\renewcommand{\arraystretch}{1.35}
\centerline{\begin{tabular}{|c||c|c||c|c|}
\hline
$ \ell$& $F_\ell$ & $G_\ell$ & $\widetilde F_\ell$ & $\widetilde G_\ell$ \\
\hline
{$1$} & $-D_xK(u)$ & $\int D_t^\alpha u\,\mathrm{d}t$& $-D_{\Delta x}K(u_{i-1,j})$ & $\Delta t \sum_{\ell=0}^{j-1} D_{\Delta t}^\alpha u_{i,\ell}$\\
\hline
$2$ & $K(u)\!-\!xD_xK(u)$ & $xG_1$& $A_{\Delta x}K(u_{i-1,j})\!-\!A_{\Delta x}(x_{i-1})D_{\Delta x}K(u_{i-1,j})$ & $x_i\widetilde{G}_1$\\
\hline
{$3$} & $-tF_1$ & $tG_1-\int G_1\,\mathrm{d}t$& $-t_j\widetilde{F}_1$ & $t_j\widetilde{G}_1-\Delta t \sum_{r=0}^{j} \widetilde{G}_1$\\
\hline
$4$ & $tF_2$ & $xG_3$& $t_j\widetilde{F}_2$ & $x_i\widetilde{G}_3$\\
\hline
\end{tabular}}
\endgroup
\end{table}

\section{Numerical tests}\label{sec:tests}
In this section we solve problem \eqref{FPDE} with two different choices of function $K$ that define a linear and a nonlinear problem, respectively. In both cases we set $q=2$ and $(x,t)\in (0,1)\times (0,2)$, and we study the two cases of subdiffusion ($p=1$) and superdiffusion ($p=2$). We consider the boundary conditions
\begin{equation}\label{eq:bctest}
u(0,t)=u(1,t)=\frac{t^p}p\qquad t\in[0,2],
\end{equation}
and an initial configuration of total rest \eqref{IC2}, i.e. if $0<\alpha<1$ (subdiffusion case),
\begin{equation}\label{eq:test_sub}
  u(x,0)=0,
\end{equation}
while, if $1<\alpha<2$ (superdiffusion case),
\begin{equation}\label{eq:test_sub2}
  u(x,0)=\frac{\partial }{\partial t} u(x,0)=0.
\end{equation}
Therefore, the fractional derivative in \eqref{FPDE} is equivalently defined by either equation \eqref{RLdef} or equation \eqref{Cdef}.

The space grid is defined as in (\ref{spgrid}) with $\Delta x=0.005$, that is small enough to study the rate of convergence in time of the method. Inspired by \cite{bcdp17} we choose the time nodes $t_j$ equal to the Chebyshev nodes in $[0,2]$ and the basis $\mathcal{P}_j$ in \eqref{modalexp}--\eqref{discIC} defined by the Jacobi polynomials in $[0,2]$ (see \cite{BHRAWY2015876}),
\begin{equation*}
\mathcal{P}_j(t)=\sum_{k=0}^j\frac{(-1)^{j+k}(j+k+p)!}{(k+p)!(j-k)!k!2^k}t^k,
\qquad j=0,\ldots, N+p.
\end{equation*}
Different choices, such as uniform nodes and the power basis \eqref{eq:powerb}, can also be considered. However, in the experiments below, choosing Chebyshev nodes and the Jacobi basis yields a matrix $C_1$ in \eqref{eq:matform0} with lower condition number and a faster rate of convergence, respectively.
\subsection*{Linear problem}
We consider here the linear fractional PDE defined by \eqref{FPDE} with $K(u)=u$, i.e.
\begin{equation}\label{eq:lineq}
D_t^\alpha u-D_x^2 u=0.
\end{equation}
The exact solution satisfying the boundary conditions \eqref{eq:bctest} and the initial conditions \eqref{eq:test_sub} or \eqref{eq:test_sub2} are given in \cite{materials} and amounts to
\begin{align}\label{uex}
u_{\text{exact}}(x,t)=&\,\sum_{n=1}^\infty a_N(x,t)
\\\nonumber
=&\,\sum_{n=1}^\infty \frac{-4t^p}{(2n-1)\pi}\sin{\left((2n-1)\pi x\right)}E_{\alpha,p+1}(-(2n-1)^2\pi^2t^\alpha)+\frac{t^p}p,
\end{align}
where
$$E_{\alpha,\beta}(z)=\sum_{k=0}^\infty \frac{z^k}{\Gamma(\alpha k+\beta)},$$
is the Mittag-Leffler function with two arguments. A reference solution, $\bar{u}$, is calculated by truncating the infinite sum in (\ref{uex}) after $R$ terms, where $R$ is the smallest integer such that $\|a_R\|<\text{tol} = 10^{-12}$, and by computing the Mittag-Leffler function using the \textsc{Matlab} routine $\texttt{ml}$ \cite{gar15,Garrappa2015}.

In order to show the convergence of the proposed numerical scheme, its spectral accuracy and its conservative properties, the error in the numerical solution and in the discrete conservation laws are calculated as
$$\text{Sol err}=\max_i\max_j |u_{i,j}-\bar u(x_i,t_j)|,$$
and
\begin{equation}\label{CLerr}
\text{Err}_\ell=\max_i\max_j |D_{\Delta t}\widetilde{G}_\ell(x_i,t_j,u_{i,j})+D_{\Delta x}\widetilde{F}_\ell(x_i,t_j,u_{i,j})|, \ \ell=1,\ldots,p\cdot q,
\end{equation}
with functions $\widetilde F_\ell$ and $\widetilde G_\ell$ defined in Section~\ref{secmodel}.

\begin{table}[t]
\caption{Linear problem. Errors in solution and conservation laws. \label{tab:CLaws}}
\small
\begingroup
\setlength{\tabcolsep}{6pt}
\renewcommand{\arraystretch}{1.12}
\centerline{\begin{tabular}{|c||c|c|c|c|c|}
\hline
$\alpha$ & Sol err & Err$_1$ & Err$_2$ & Err$_3$ & Err$_4$\\
\hline
0.1& 2.37e-06& 1.54e-11& 1.21e-11& N.A.& N.A.\\
0.5& 4.29e-04& 1.59e-11& 1.35e-11& N.A.& N.A.\\
0.9& 1.04e-03& 1.57e-11& 1.41e-11& N.A.& N.A.\\
1.1& 2.99e-05& 1.60e-11& 1.30e-11& 3.02e-11& 2.43e-11\\
1.5& 1.11e-04& 1.67e-11& 1.58e-11& 3.23e-11& 2.20e-11\\
1.9& 2.34e-03& 1.69e-11& 1.52e-11& 3.30e-11& 2.97e-11\\
\hline
\end{tabular}}
\endgroup
\end{table}
In Table~\ref{tab:CLaws} we show the error in the solution and conservation laws given by method \eqref{spectK} with $N=10$ applied to \eqref{eq:lineq}. We consider three different values of $\alpha$ corresponding to subdiffusion problems. Two of these values are close to the integer cases ($\alpha=0.1$ and $\alpha=0.9$). The third is an intermediate value ($\alpha=0.5$). Similarly, we consider three different superdiffusion problems ($\alpha=1.1,1.5,1.9$). The error in the solution is larger for values of $\alpha$ that are closer to $p$. In all cases the error in the conservation laws is only due to the roundoffs. In particular, we verified that the errors in the conservation laws are comparable in magnitude to the residual of equation \eqref{eq:matform0}.
\begin{figure}[p]
\begin{center}
\includegraphics[scale=0.7]{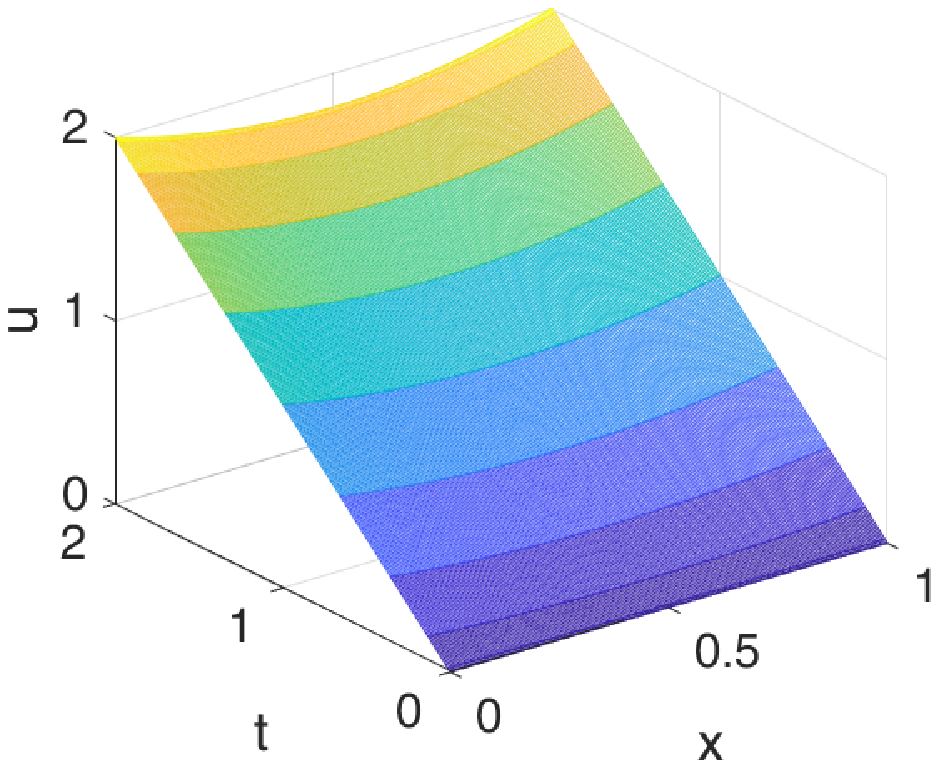}
\includegraphics[scale=0.7]{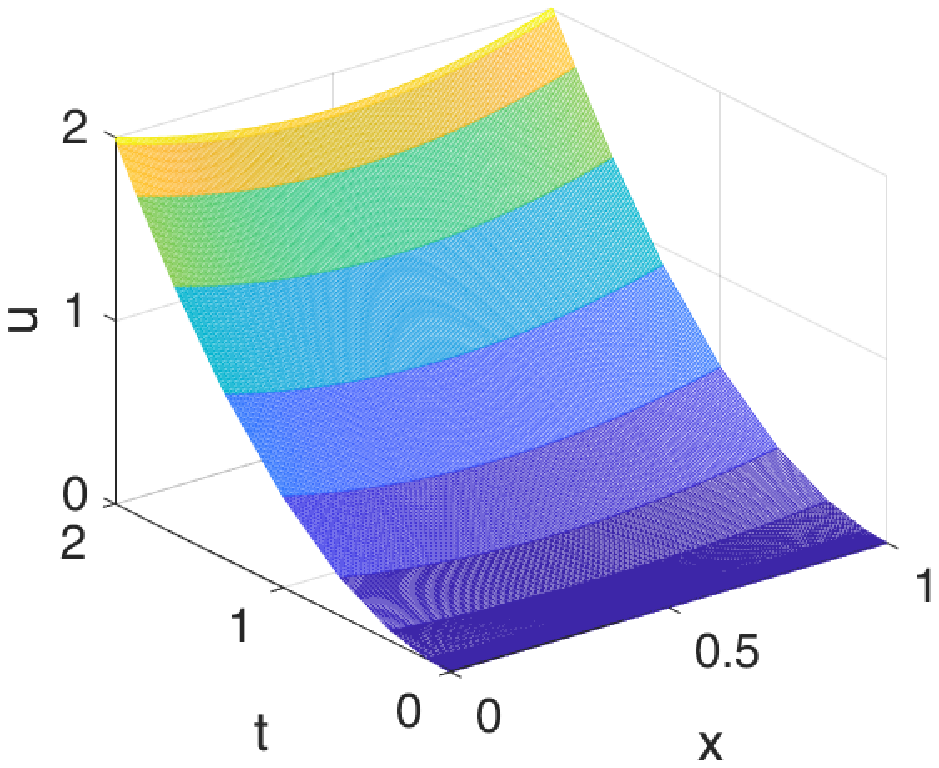}
\caption{Linear problem. Numerical solution with $N=10$, $\Delta x=0.005$, $\alpha=0.5$ (left) and $\alpha=1.5$ (right).}\label{diff3d}
\end{center}
\end{figure}
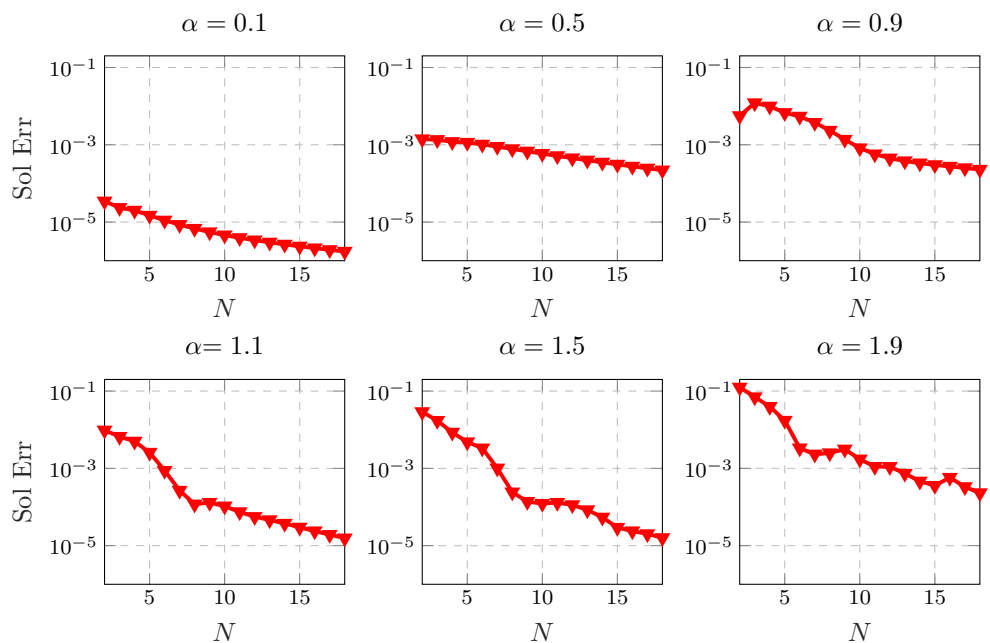
\begin{figure}[p]
\begin{center}
	\begin{tikzpicture}

\begin{axis}[%
width=1.257in,
height=1.073in,
at={(1.133in,2.333in)},
scale only axis,
xmin=2,
xmax=18,
xlabel style={font=\color{white!15!black}},
xlabel={$N$},
ymode=log,
ymin=1e-06,
ymax=0.2,
ylabel style={font=\color{white!15!black}},
ylabel={Sol Err},
yminorticks=true,
title={$\alpha=0.1$},
axis background/.style={fill=white},
xmajorgrids,
ymajorgrids,
yminorgrids,
legend style={legend cell align=left, align=left, draw=white!15!black}
]
\addplot [color=red, line width=1.5pt, mark=triangle, mark options={solid, rotate=180, red}]
  table[row sep=crcr]{%
1	0.000126614280401727\\
2	3.45415757511125e-05\\
3	2.35473671152953e-05\\
4	2.00168692499059e-05\\
5	1.48314040233632e-05\\
6	1.10974549831777e-05\\
7	8.55451335185498e-06\\
8	6.7879893145828e-06\\
9	5.52151076960161e-06\\
10	4.58529037188071e-06\\
11	3.90898216390267e-06\\
12	3.40369749085578e-06\\
13	2.98904197848504e-06\\
14	2.64574383745941e-06\\
15	2.35887663788126e-06\\
16	2.11697860846849e-06\\
17	1.91123626758216e-06\\
18	1.73483168659783e-06\\
19	1.5824658945035e-06\\
20	9.10124145736546e-06\\
21	5.11841650230327e-05\\
22	0.000250626464884274\\
};

\end{axis}
\end{tikzpicture}%
	\begin{tikzpicture}
\begin{axis}[%
width=1.257in,
height=1.073in,
at={(1.133in,2.333in)},
scale only axis,
xmin=2,
xmax=18,
xlabel style={font=\color{white!15!black}},
xlabel={$N$},
ymode=log,
ymin=1e-6,
ymax=0.2,
yminorticks=true,
title={$\alpha=0.5$},
axis background/.style={fill=white},
xmajorgrids,
ymajorgrids,
yminorgrids,
legend style={legend cell align=left, align=left, draw=white!15!black}
]
\addplot [color=red, line width=1.5pt, mark=triangle, mark options={solid, rotate=180, red}]
  table[row sep=crcr]{%
1	0.00284065383072019\\
2	0.00140850373306567\\
3	0.00134785154976022\\
4	0.00119614320704808\\
5	0.0011450841361661\\
6	0.00102523515871022\\
7	0.000898088664089469\\
8	0.00078127141531481\\
9	0.000678969109732683\\
10	0.000591023381394273\\
11	0.000515935688651978\\
12	0.000451923453577849\\
13	0.000397291889471678\\
14	0.000350550410224561\\
15	0.000310431739721617\\
16	0.00027587540356629\\
17	0.000245999978006211\\
18	0.000220075786014757\\
19	0.000197495198370517\\
20	0.000177747349259934\\
21	0.000217812126870776\\
22	0.000630712902265573\\
23	0.00114715891489348\\
};

\end{axis}
\end{tikzpicture}%
	\begin{tikzpicture}

\begin{axis}[%
width=1.257in,
height=1.073in,
at={(1.133in,2.333in)},
scale only axis,
xmin=2,
xmax=18,
xlabel style={font=\color{white!15!black}},
xlabel={$N$},
ymode=log,
ymin=1e-6,
ymax=0.2,
yminorticks=true,
axis background/.style={fill=white},
title={$\alpha=0.9$},
xmajorgrids,
ymajorgrids,
yminorgrids,
legend style={legend cell align=left, align=left, draw=white!15!black}
]
\addplot [color=red, line width=1.5pt, mark=triangle, mark options={solid, rotate=180, red}]
  table[row sep=crcr]{%
1	0.00607999101163892\\
2	0.00559637418455838\\
3	0.0120781386835688\\
4	0.00991748256853418\\
5	0.00672950475530926\\
6	0.00542300325328182\\
7	0.00374716355049362\\
8	0.00234357759249866\\
9	0.00136945310623933\\
10	0.000823715419667359\\
11	0.000573813519897624\\
12	0.000448961578712291\\
13	0.000379232351667154\\
14	0.000334677001583614\\
15	0.000301945965962325\\
16	0.000275022592513333\\
17	0.000251176152208637\\
18	0.00022932940711588\\
19	0.000209102971024507\\
20	0.000190409954648633\\
21	0.000173262561758012\\
22	0.000596940214391051\\
23	0.00372068980717044\\
24	0.0442779777149838\\
};

\end{axis}

\end{tikzpicture}
	\begin{tikzpicture}

\begin{axis}[%
width=1.257in,
height=1.073in,
at={(1.133in,2.333in)},
scale only axis,
xmin=2,
xmax=18,
xlabel style={font=\color{white!15!black}},
xlabel={$N$},
ymode=log,
ymin=1e-06,
ymax=0.2,
ylabel style={font=\color{white!15!black}},
ylabel={Sol Err},
yminorticks=true,
title={$\alpha{=1.1}$},
axis background/.style={fill=white},
xmajorgrids,
ymajorgrids,
yminorgrids,
legend style={legend cell align=left, align=left, draw=white!15!black}
]
\addplot [color=red, line width=1.5pt, mark=triangle, mark options={solid, rotate=180, red}]
  table[row sep=crcr]{%
1	0.0124367701663861\\
2	0.00968727866827357\\
3	0.00663650640995894\\
4	0.00500030672098367\\
5	0.00257698294939912\\
6	0.00087771211928378\\
7	0.000267808975999539\\
8	0.000118128735312917\\
9	0.000130327497141314\\
10	0.000105080443903283\\
11	7.45853638306482e-05\\
12	5.60175456783552e-05\\
13	4.62924863727787e-05\\
14	3.74908485689662e-05\\
15	2.97953337137224e-05\\
16	2.38205921757606e-05\\
17	1.92481158178825e-05\\
18	1.56418945050363e-05\\
19	1.27971084438738e-05\\
20	1.73497964146563e-05\\
21	8.53090368369003e-05\\
22	0.000372358014306684\\
};

\end{axis}

\end{tikzpicture}%
	\begin{tikzpicture}

\begin{axis}[%
width=1.257in,
height=1.073in,
at={(1.133in,2.333in)},
scale only axis,
xmin=2,
xmax=18,
xlabel style={font=\color{white!15!black}},
xlabel={$N$},
ymode=log,
ymin=1e-06,
ymax=0.2,
yminorticks=true,
title={$\alpha=1.5$},
axis background/.style={fill=white},
xmajorgrids,
ymajorgrids,
yminorgrids,
legend style={legend cell align=left, align=left, draw=white!15!black}
]
\addplot [color=red, line width=1.5pt, mark=triangle, mark options={solid, rotate=180, red}]
  table[row sep=crcr]{%
1	0.0147834914765592\\
2	0.0289993245906055\\
3	0.0172082048131172\\
4	0.00854096625975798\\
5	0.00480076329846746\\
6	0.00333182717321406\\
7	0.0010074825177302\\
8	0.000241933833714195\\
9	0.000138180690694212\\
10	0.000122754167721345\\
11	0.000130079586263411\\
12	0.00011317006898659\\
13	8.40091731469927e-05\\
14	5.40742703839525e-05\\
15	2.94110060136132e-05\\
16	2.38676117550937e-05\\
17	2.0119806586727e-05\\
18	1.59632206409854e-05\\
19	1.46526285061815e-05\\
20	1.36640325270054e-05\\
21	2.5830760520984e-05\\
22	5.97468278618063e-05\\
};

\end{axis}
\end{tikzpicture}%
	\begin{tikzpicture}
\begin{axis}[%
width=1.257in,
height=1.073in,
at={(1.133in,2.333in)},
scale only axis,
xmin=2,
xmax=18,
xlabel style={font=\color{white!15!black}},
xlabel={$N$},
ymode=log,
ymin=1e-6,
ymax=0.2,
yminorticks=true,
axis background/.style={fill=white},
title={$\alpha=1.9$},
xmajorgrids,
ymajorgrids,
yminorgrids,
legend style={legend cell align=left, align=left, draw=white!15!black}
]
\addplot [color=red, line width=1.5pt, mark=triangle, mark options={solid, rotate=180, red}]
  table[row sep=crcr]{%
1	0.0599352571016074\\
2	0.125038784196653\\
3	0.0702678643773402\\
4	0.0396474648388052\\
5	0.0173196738837789\\
6	0.00333539778343899\\
7	0.00231980856672379\\
8	0.00249473291666538\\
9	0.00312315623537018\\
10	0.00172403035581037\\
11	0.0011113691323496\\
12	0.00110454636281188\\
13	0.000737057392528273\\
14	0.000463086764053822\\
15	0.000358345541120672\\
16	0.000578554166168255\\
17	0.000332041063259014\\
18	0.000231476118079311\\
19	0.000262249717185803\\
20	0.000215068136356411\\
21	0.000211268427794398\\
22	0.00129115317139283\\
};

\end{axis}

\end{tikzpicture}%
\end{center}
	\caption{Linear problem. Rate of convergence in time. $\Delta x=0.005$ (logarithmic scale on $y$-axis).}
	\label{a2}
\end{figure}

In Fig.~\ref{diff3d} we show the solutions of method \eqref{spectK} with $\alpha=0.5$ and $\alpha=1.5$. These two graphs very well reproduce the behaviour of the exact solution shown in \cite{materials}.

In Figure~\ref{a2} we study the convergence of the method by plotting the logarithm of the error in the solution against $N$ for $N=2,\ldots,18$. These graphs show that the convergence of the method is exponential in time.

\subsection*{Nonlinear problem}
We consider now equation \eqref{FPDE} with $K(u)=\sqrt{u}$, therefore we solve equation
\begin{equation}\label{eq:sqrteq}
D_t^\alpha u-D_x^2 (\sqrt{u})=0,
\end{equation}
with boundary conditions \eqref{eq:bctest} and initial conditions \eqref{eq:test_sub} if $0< \alpha <1$ or \eqref{eq:test_sub2} if $1<\alpha <2$.

The exact solution of this problem is not known, and so we compute a reference solution, $\bar{u}$, setting $\bar N=13$. For $N<\bar N$, the error in the solution at the final time is estimated by
$$\text{Sol err}=\sqrt{\sum_i |u_{i,N}-\bar u_{i,\bar N}|}.$$
The error in the conservation laws is evaluated as in (\ref{CLerr}).

\begin{table}[t]
\caption{Nonlinear problem. Errors in solution and conservation laws.\label{tab:CLawsNL}}
\small
\begingroup
\setlength{\tabcolsep}{6pt}
\renewcommand{\arraystretch}{1.12}
\centerline{\begin{tabular}{|c||c|c|c|c|c|}
\hline
$\alpha$ & Sol err & Err$_1$ & Err$_2$ & Err$_3$ & Err$_4$\\
\hline
0.1& 6.78e-07 & 2.39e-11 & 1.99e-11 & N.A. & N.A.\\
0.5& 4.08e-15 & 2.46e-11 & 1.88e-11 & N.A. & N.A.\\
0.9& 4.37e-05 & 2.76e-11 & 2.25e-11 & N.A.& N.A.\\
1.1& 1.10e-05 & 2.33e-11 & 2.00e-11 & 4.35e-11 & 3.78e-11 \\
1.5& 2.00e-04 & 2.94e-11 & 2.83e-11 & 4.83e-11 & 4.64e-11 \\
1.9& 4.30e-03 & 2.89e-11 & 2.82e-11 & 5.33e-11 & 4.64e-11\\
\hline
\end{tabular}}
\endgroup
\end{table}

Table~\ref{tab:CLawsNL} shows the solution error and the error in the conservation laws given by method \eqref{spectK} with $N=10$ applied to \eqref{eq:sqrteq}. The error in the conservation laws is only due to the accuracy in the Newton method to solve the nonlinear system \ref{eq:matform0}. As in the linear case, the error in the solution is larger for $\alpha$ close to $p$, except that in this case the method exactly solves the subdiffusion problem with $\alpha=0.5$.

\begin{figure}[p]
\begin{center}
\includegraphics[scale=0.7]{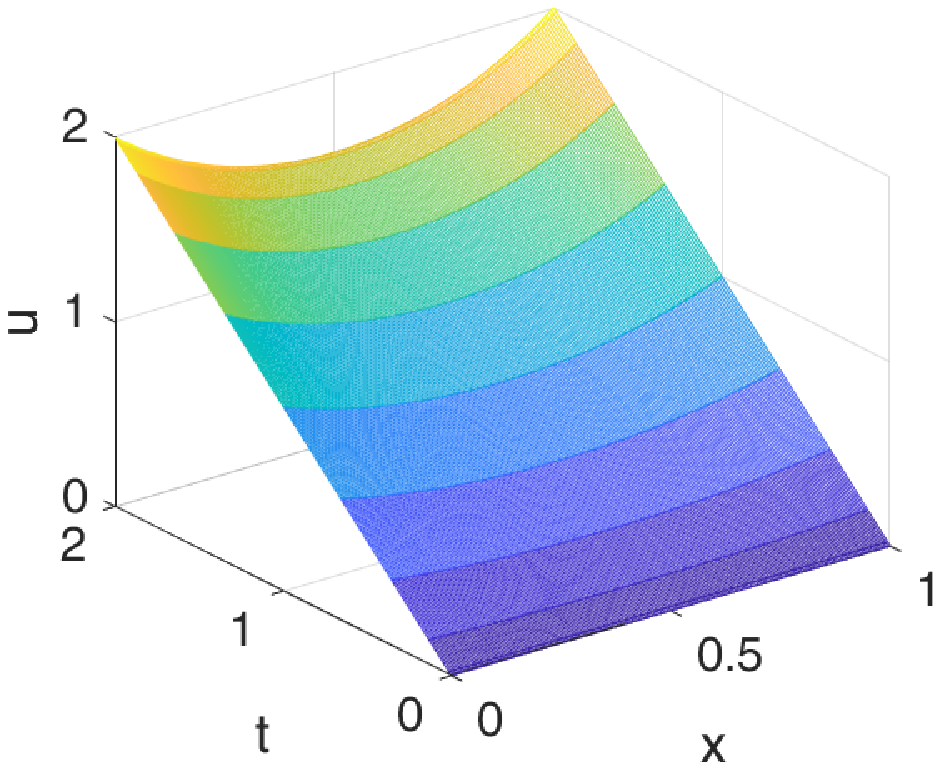}
\includegraphics[scale=0.7]{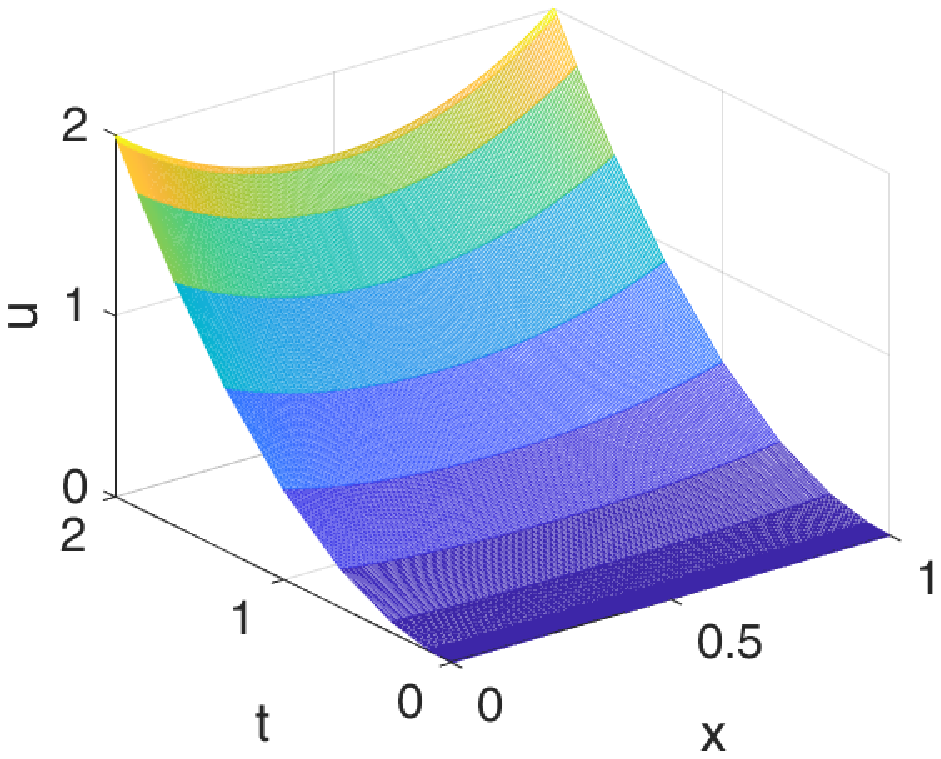}
\caption{Nonlinear problem. Numerical solution with $N=10$, $\Delta x=0.005$, $\alpha=0.5$ (left) and $\alpha=1.5$ (right).}\label{NLdiff3d}
\end{center}
\end{figure}
\begin{figure}[p]
\begin{center}
	\begin{tikzpicture}

\begin{axis}[%
width=1.257in,
height=1.073in,
at={(1.133in,2.333in)},
scale only axis,
xmin=2,
xmax=12,
xlabel style={font=\color{white!15!black}},
xlabel={$N$},
ymode=log,
ymin=1e-15,
ymax=1,
ylabel style={font=\color{white!15!black}},
ylabel={Sol Err},
yminorticks=true,
title={$\alpha=0.1$},
axis background/.style={fill=white},
xmajorgrids,
ymajorgrids,
yminorgrids,
legend style={legend cell align=left, align=left, draw=white!15!black}
]
\addplot [color=red, line width=1.5pt, mark=triangle, mark options={solid, rotate=180, red}]
  table[row sep=crcr]{%
1	0.0195203169075958\\
2	0.00145030498461877\\
3	0.000815105895982609\\
4	5.50261564219582e-05\\
5	3.01627500937508e-05\\
6	6.80250839008492e-06\\
7	4.51119893635942e-06\\
8	1.69975135953667e-06\\
9	1.05555923878995e-06\\
10	6.77945311078692e-07\\
11	2.52876630618243e-07\\
12	3.85634111625139e-07\\
};

\end{axis}
\end{tikzpicture}%
\hspace{-0.3cm}	\begin{tikzpicture}

\begin{axis}[%
width=1.257in,
height=1.073in,
at={(1.133in,2.333in)},
scale only axis,
xmin=2,
xmax=12,
xlabel style={font=\color{white!15!black}},
xlabel={$N$},
ymode=log,
ymin=1e-15,
ymax=1,
yminorticks=true,
title={$\alpha=0.5$},
axis background/.style={fill=white},
xmajorgrids,
ymajorgrids,
yminorgrids,
legend style={legend cell align=left, align=left, draw=white!15!black}
]
\addplot [color=red, line width=1.5pt, mark=triangle, mark options={solid, rotate=180, red}]
  table[row sep=crcr]{%
1	5.02429586778808e-15\\
2	3.50380476589407e-15\\
3	3.46133283549042e-15\\
4	4.01526397020849e-15\\
5	3.67549221219551e-15\\
6	3.65531552429895e-15\\
7	3.97825613944057e-15\\
8	3.91579926007877e-15\\
9	6.32339996626736e-15\\
10	4.08224039257781e-15\\
11	3.76167416018478e-15\\
12	1.02909949240062e-14\\
};

\end{axis}
\end{tikzpicture}%
\hspace{-0.3cm}	\begin{tikzpicture}

\begin{axis}[%
width=1.257in,
height=1.073in,
at={(1.133in,2.333in)},
scale only axis,
xmin=2,
xmax=12,
xlabel style={font=\color{white!15!black}},
xlabel={$N$},
ymode=log,
ymin=1e-15,
ymax=1,
yminorticks=true,
axis background/.style={fill=white},
title={$\alpha=0.9$},
xmajorgrids,
ymajorgrids,
yminorgrids,
legend style={legend cell align=left, align=left, draw=white!15!black}
]
\addplot [color=red, line width=1.5pt, mark=triangle, mark options={solid, rotate=180, red}]
  table[row sep=crcr]{%
1	0.136747297974531\\
2	0.0344967599323555\\
3	0.00889239966562753\\
4	0.00788908243594297\\
5	0.00342940458372313\\
6	0.000852925396445238\\
7	0.000115878189938765\\
8	7.24137858580057e-05\\
9	4.31223610514301e-05\\
10	4.37397635723289e-05\\
11	1.11519972612039e-05\\
12	2.24183343495833e-05\\
};

\end{axis}
\end{tikzpicture}
	\begin{tikzpicture}
\begin{axis}[%
width=1.257in,
height=1.073in,
at={(1.133in,2.333in)},
scale only axis,
xmin=2,
xmax=12,
xlabel style={font=\color{white!15!black}},
xlabel={$N$},
ymode=log,
ymin=1e-15,
ymax=1,
yminorticks=true,
ylabel style={font=\color{white!15!black}},
ylabel={Sol Err},
axis background/.style={fill=white},
title style={font=\bfseries},
title={$\alpha{=1.1}$},
xmajorgrids,
ymajorgrids,
yminorgrids,
legend style={legend cell align=left, align=left, draw=white!15!black}
]
\addplot [color=red, line width=1.5pt, mark=triangle, mark options={solid, rotate=180, red}]
  table[row sep=crcr]{%
1	0.0449695958194733\\
2	0.0128074850184744\\
3	0.000707188116505385\\
4	0.00298693395520618\\
5	0.0016267130255285\\
6	0.000279092426457771\\
7	0.000151861010525671\\
8	0.000118572262901487\\
9	3.64534421932558e-05\\
10	1.10349474938994e-05\\
11	3.28370722761403e-06\\
12	4.04237366769791e-06\\
};

\end{axis}
\end{tikzpicture}%
	\begin{tikzpicture}

\begin{axis}[%
width=1.257in,
height=1.073in,
at={(1.133in,2.333in)},
scale only axis,
xmin=2,
xmax=12,
xlabel style={font=\color{white!15!black}},
xlabel={$N$},
ymode=log,
ymin=1e-15,
ymax=1,
yminorticks=true,
ylabel style={font=\color{white!15!black}},
axis background/.style={fill=white},
title style={font=\bfseries},
title={$\alpha=1.5$},
xmajorgrids,
ymajorgrids,
yminorgrids,
legend style={legend cell align=left, align=left, draw=white!15!black}
]
\addplot [color=red, line width=1.5pt, mark=triangle, mark options={solid, rotate=180, red}]
  table[row sep=crcr]{%
1	0.286576491552045\\
2	0.126420258728108\\
3	0.0118421300325731\\
4	0.054059019761892\\
5	0.0133959403687655\\
6	0.0114302584674477\\
7	0.00302890973060446\\
8	0.00188140781388482\\
9	0.00040505007653147\\
10	0.000200060656320793\\
11	0.000277684735834782\\
12	0.000257467028891173\\
};

\end{axis}
\end{tikzpicture}%
	\begin{tikzpicture}

\begin{axis}[%
width=1.257in,
height=1.073in,
at={(1.133in,2.333in)},
scale only axis,
xmin=2,
xmax=12,
xlabel style={font=\color{white!15!black}},
xlabel={$N$},
ymode=log,
ymin=1e-15,
ymax=1,
yminorticks=true,
ylabel style={font=\color{white!15!black}},
axis background/.style={fill=white},
title style={font=\bfseries},
title={$\alpha=1.9$},
xmajorgrids,
ymajorgrids,
yminorgrids,
legend style={legend cell align=left, align=left, draw=white!15!black}
]
\addplot [color=red, line width=1.5pt, mark=triangle, mark options={solid, rotate=180, red}]
  table[row sep=crcr]{%
1	0.424094292900454\\
2	0.468796548239649\\
3	0.108729142067097\\
4	0.191123695287828\\
5	0.0475713095492379\\
6	0.01833832544794\\
7	0.0182738449969074\\
8	0.00577756353312567\\
9	0.00718733643168301\\
10	0.00430451766597249\\
11	0.00639811898163244\\
12	0.00331995397350142\\
};

\end{axis}
\end{tikzpicture}%
\end{center}
	\caption{Nonlinear problem. Rate of convergence in time. $\Delta x=0.005$ (logarithmic scale on $y$-axis).}
	\label{NL1}
\end{figure}
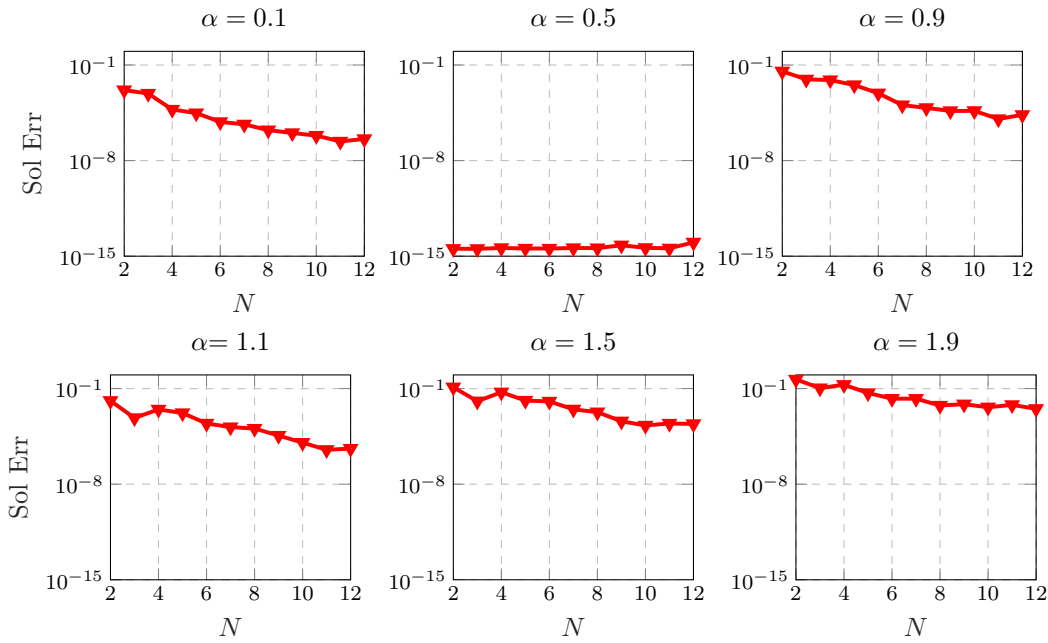

The numerical solutions obtained for $\alpha=0.5$ and $\alpha=1.5$ are shown in Figure~\ref{NLdiff3d}. The convergence of the method is analysed in Figure~\ref{NL1} where we plot the logarithm of the error in the solution against $N$ for $N=2,\ldots,12$. Also in this case the rate of convergence in time is exponential, except for $\alpha=0.5$, where the method is exact for all values of $N$.
\section{Conclusions}\label{sec:concl}
In the present paper we have investigated a class of time fractional diffusion PDEs of arbitrary fractional order. The purpose is twofold. On one hand, we have derived sufficient conditions to find conservation laws of a FDE of this kind, and derived an analogue result for a numerical method to have discrete conservation laws. On the other hand, we have generalised the spectral method in \cite{bcdp17} to approximate Caputo and Riemann-Liouville derivatives of arbitrary order. This method has been coupled with a finite difference approximation in space, and proved to have conservation laws. In the cases of subdiffusion and superdiffusion, we have derived the expressions of the conservation laws and performed numerical tests to confirm the theoretical findings and show the convergence of the method.

\subsection*{Acknowledgements}
The authors are members of the GNCS group. This work is supported by GNCS-INDAM project and by the Italian Ministry of University and Research, through the PRIN 2017 project (No. 2017JYCLSF) “Structure preserving approximation of evolutionary problems”.
\bibliographystyle{plain}
\bibliography{bibfile}

\end{document}